\newtheorem{defi}{Definition}[section]
\newtheorem{lemma}[defi]{Lemma}
\newtheorem{proposition}[defi]{Proposition}
\newtheorem{theorem}[defi]{Theorem}
\newtheorem{corollary}[defi]{Corollary}
\newcommand{\cc}{\mathcal}
\newcommand{\R}{\mathbb R}
\newcommand{\N}{\mathbb N}
\newcommand{\C}{\mathbb C}
\newcommand{\w}{^W\!\!D}
\begin{document}

\title{Semilinear equations associated with Dunkl Laplacian
}

%\titlerunning{Semilinear equations}        % if too long for running head

\author{Mohamed Ben Chrouda$^a$
\and
Khalifa El Mabrouk$^b$
\and
Kods Hassine$^c$
}

%\authorrunning{Short form of author list} % if too long for running head
\maketitle
\begin{center}{\scriptsize{\emph{$^a$Dep. of Mathematics, High institut of computer sciences and mathematics, 5000, Monastir, Tunisia }}}\end{center}
\begin{center}{\scriptsize{\emph{$^b$Dep. of Mathematics, High School of Sciences and Technology, 4011 Hammam Sousse, Tunisia }}}\end{center}
\begin{center}{\scriptsize{\emph{$^c$Dep. of Mathematics,  Faculty of Science of Monastir, 5000 Monastir, Tunisia }}}\end{center}

\begin{abstract}
Let $\Delta_k$ be the Dunkl Laplacian on $\R^d$ associated with a reflection group $W$ and a multiplicity function $k$. The purpose of this paper is to establish necessary and sufficient condition
under which there exists a positive solution of the  equation
$\Delta_ku=\varphi(u)$
in the unit ball of $\R^d$ as well as in the whole space $\R^d$.
\end{abstract}
\textbf{Keywords:} {Dunkl Laplacian, Dirichlet problems, Green operators, semilinear equations}
\textbf{Subject Classification:} {31B05 - 31B15 - 35J08 - 35J61}
\section{Introduction}
The Dunkl Laplacian is the sum of a second order differential operator and a difference term associated with a multiplicity function $k$ and a reflection group $W$. An important motivation to study the Dunkl Laplacian rises from its relevance for the analysis of certain exactly solvable models of  mechanics, namely the Calogero-Moser-Sutherland type (see \cite{van,khi,mlv}). Since it's introduction by C. F. Dunkl in~\cite{dunkl}, analysis of Dunkl theory has been the subject of many articles  and  it has  deep and fruitful interactions with various mathematic's fields namely  Fourier analysis and special function \cite{dej,trim,xu}, algebra (double affine Hecke algebras~\cite{khon}) and probability theory (Feller processes with jumps~\cite{gy,demni}).
\par By introducing some potential theoretical concepts relative to $\Delta_k$, we investigate in this paper the equation
\begin{equation}\label{introeq1}
\Delta_ku=\varphi(u)
\end{equation}
 in open balls of $\R^d$ with center $0$, where the perturbation $\varphi:[0,\infty[\rightarrow[0,\infty[$ is assumed to be locally Lipschitz and  non decreasing  such that $\varphi(0)=0$. If the multiplicity function $k$ is identically vanishing, the operator $\Delta_k$ reduces to the classical Laplace operator~$\Delta$. In this case, the elliptic equation
$
\Delta u=\varphi(u)
$
have been studied by several authors in different classes of domains (bounded and unbounded) and under various conditions on the perturbation $\varphi$ (see \cite{bresiz,dyn00,khalifabounded,lairwood,lazer,kel,oss}).
\par For arbitrary multiplicity function $k$, it was shown in \cite{rosler1} that $\Delta_k$ generates a positive strongly continuous contraction semigroup (which reduces to the classical Brownian semigroup if $k=0$). This fact gives rise to a Hunt process, called Dunkl process, and so to a corresponding family of harmonic kernels~$(H_V)_V$. The Dunkl process has a discontinuous paths (see \cite{gy}). So, in virtue of the general theory of balayage spaces \cite{hansen}, Dunkl process generates a Balyage space and not a harmonic space. This yields that for every bounded open set $V$ and every $x\in V$ the harmonic measure $H_V(x,\cdot)$ is not necessarily supported by the Euclidean boundary~$\partial V$ of~$V$ but it may live on the entire complement~$V^c:=\R^d\backslash V$.
\par Throughout this paper we assume that $k$ is strictly positive. We shall show that, for each bounded open subset $V$ of $\R^d$, the harmonic measure $H_V(x,\cdot)$ relative to $V$ is supported by a compact set of $V^c$. Moreover, $H_B(x,\partial B)=1$ for every open ball $B$ centered at the origin of $\R^d$ and every $x\in B$. This fact allows us to investigate the Dirichlet problem
\begin{equation}\label{pbd}\left\{
\begin{array}{lccl}
\Delta_ku&=&\varphi(u) &\mbox{ in }{B}\\
u&=&f&\mbox{ on }\partial B
\end{array}\right.
\end{equation}
where $f$ is in $C^+(\partial B)$ the set of all nonnegative continuous functions on $\partial B$. More precisely, we prove that the function $H_Bf$ defined on $\overline{B}$ by
 $$H_Bf(x)=
 \int_{\partial B}f(y)H_B(x,dy)\;\mbox{ if }\;x\in B\quad \textrm{ and } \quad
 H_Bf(x)=f(x)\;\mbox{ if }\;x\in\partial B,
 $$
 is the unique continuous  extension $u$ of $f$ on $\overline{B}$ satisfying $\Delta_k u=0$ in $B$. This means that $H_Bf$ is the unique solution of~(\ref{pbd}), replacing $\varphi$ by 0.
\par Assuming that $\varphi$ is not trivial, we prove that for each $f\in C^+(\partial B)$, Problem~(\ref{pbd}) admits a unique solution in $C(\overline B)$. In fact, we first show that $u$ satisfies~(\ref{pbd}) if and only if
$$u+G^k_B(\varphi(u))=H_Bf,$$
where $G^k_B$ is the Green operator on $B$. Then, by a compactness argument of $G^k_B$ we show that the map $u\mapsto H_Bf-G^k_B(\varphi(u))$ admits  a fixed point $u\in C(\overline B)$ and consequently, $u$ is a solution of~(\ref{pbd}). The uniqueness of such solution is established by mean of a minimum principle. By giving some properties of radially symmetric solutions, we prove that Eq~(\ref{introeq1}) admits a positive blow up solution (that is a positive solution on $B$ such that $u=\infty$ on~$\partial B$) if and only if there exists a constant $a>0$ such that
 $$\int_a^\infty\frac{dt}{\sqrt{\int_0^t\varphi(s)ds}}<\infty.$$
 \par The question whether or not there exists a positive solution of Eq~(\ref{introeq1}) on the whole space $\R^d$ is also treated in this paper. In fact, we prove that %so called Keller-Osserman condition i.e.,
 \begin{equation}\label{keloss}
 \int_a^\infty\frac{dt}{\sqrt{\int_0^t\varphi(s)ds}}=\infty\quad \mbox{ for some } a>0
 \end{equation}
 is a necessary and sufficient condition on $\varphi$ to obtain a nonnegative solution $u$ of Eq~(\ref{introeq1}) in $\R^d$. Moreover  $\lim_{|x|\rightarrow\infty}u(x)=\infty$. Notice that (\ref{keloss}) is called Keller-Osserman condition in the literature due to J.B.Keller \cite{kel} and R.Osserman~\cite{oss} who proved that (\ref{keloss}) characterize the existence of a positive solution of the equation $\Delta u=\varphi(u)$ in $\R^d$.

 \par As mentioned above, our approach is based in potential theoretical tools. Namely, we establish several properties for harmonic measures and  the Green kernel which are the principal materials used in the study of Eq~(\ref{introeq1}) and problem~(\ref{pbd}).

\section{Notations and preliminaries}
For every subset  $F$ of $\R^d$, let $\cc{B}(F)$ be the set of all Borel measurable functions on $F$ and
 let $1_F$ be  the indicator function of $F$. Let ${C}(F)$ be the set of all continuous real-valued functions on $F$, $C^k(F)$ is the class of all functions that are $k$ times   continuously differentiable on $F$ and $C_0(F)$ is the set of all continuous  functions on $F$ such that $u=0$ on $\partial F,$ which means that $\lim_{x\to z}u(x)=0$ for all $z\in\partial F$ and $\lim_{x\to\infty}u(x)=0$ if $F$ is unbounded. We denote by  $\mathcal{C}_c^\infty(F)$  the set of all infinitely differentiable functions on $F$ with compact support. If $\cc{G}$ is a set of numerical
  functions then $\cc{G}^+$ (respectively $\cc{G}_b$) will denote the class of all functions in $\cc{G}$ which
  are nonnegative (respectively bounded). The uniform  norm will be denoted by $\left\|\cdot\right\|.$

For every $\alpha\in \R^d\setminus\{0\}$, let $H_\alpha$ be the hyperplane orthogonal to $\alpha$ and let $\sigma_\alpha$ be the reflection in $H_\alpha$, i.e.,
$$\sigma_\alpha(x):=x-2\frac{\langle \alpha,x\rangle}{|\alpha|^2}\alpha,$$
where $\langle\cdot,\cdot\rangle$ denotes the usual inner product on $\R^d$ and $|\cdot|$ is the associated norm. A finite subset $R$ of $\R^d\setminus\{0\}$ is called a root system if $R \cap \R\cdot\alpha = \{\pm\alpha\}$ and $\sigma_\alpha(R)=R$ for all $\alpha\in R$. For a given root system $R$, the reflection $\sigma_\alpha,\;\alpha\in R$, generates a finite group $W$ called reflection group associated with $R$.
A function $k : R\rightarrow \R_+$ is called a
multiplicity function if it satisfies $k(\sigma_\alpha\beta) = k(\beta)$, for every  $\alpha,\beta\in R$.
Throughout this paper we fix a root system $R$ and  a multiplicity function $k$.
We consider the differential-difference operators $T_i,\; 1\leq i\leq d,$  defined in \cite{dunkl1}  for every  $u\in C^1(\R^d)$ by
$$
T_iu(x)=\frac{\partial u}{\partial x_i}(x)+\frac 12\sum_{\alpha\in R}k(\alpha)\alpha_i\frac{u(x)-u(\sigma_\alpha x)}{\langle\alpha,x\rangle},
$$
and called  Dunkl operators in the literature. The Dunkl Laplacian $\Delta_k$,  is  the sum of squares of Dunkl operators
$$
\Delta_k:=\sum_{i=1}^dT_i^2.
$$
It is given explicitly, for  $u\in C^2(\R^d)$, by
\begin{equation}\label{lapd}
\Delta_ku(x)=\Delta u(x)+\sum_{\alpha\in R}k(\alpha)\left(\frac{\langle\nabla u (x),\alpha\rangle}{\langle\alpha,x\rangle}-\frac{|\alpha|^2}{2}\frac{u(x)-u(\sigma_\alpha(x))}{\langle\alpha,x\rangle^2}\right).
\end{equation}
Likewise  the classical Laplace operator $\Delta$, the Dunkl Laplacian has the following  symmetry property: For   $u\in C^2(\R^d)$ and $v\in C_c^2(\R^d)$
\begin{equation}\label{ti}
\int_{\R^d} \Delta_k u(x) v(x) w_k(x)\,dx=\int_{\R^d} u(x)\Delta_kv(x)w_k(x)\,dx,
\end{equation}
where $w_k$ is the homogeneous weight function defined on~$\R^d$ by
$$
w_k(x)=\prod_{\alpha\in
R}|\langle x,\alpha\rangle|^{k(\alpha)}.
$$
A  fundamental result in Dunkl theory is the existence of an intertwining operator $V_k:C^\infty(\R^d)\rightarrow C^\infty(\R^d)$ between  the  usual laplacian $\Delta$ and  Dunkl Laplacian i.e., $\Delta_kV_k=V_k \Delta .$ We refer to \cite{dunkl2,rosler2,trim} for more details about  the intertwining operator. By mean of $V_k$, there exists a counterpart
of the usual exponential function, called  Dunkl kernel $E_k(\cdot,\cdot)$ which is defined for every  $y\in \C^d$ and $x\in\R^d$ by
$$E_k(x,y)=V_k\left(e^{\langle \cdot,y\rangle}\right)(x).$$
It is clear from (\ref{lapd}) that   if  $k$ vanishes identically then  the Dunkl Laplacian reduces to the classical Laplacian $\Delta$. In this case  the intertwining operators $V_k$ is the identity operator and so $E_k$ reduces to the classical exponential function. Notice that $E_k$ is symmetric and positive on $\R^d\times\R^d$ and satisfies   $E_k(\lambda y,x)=E_k(y,\lambda x)=$ for every $\lambda \in \C$.
  \par In all this paper we assume that
   $$m:=d+\sum_{\alpha\in R}k(\alpha)>2.$$
Let $p_t^k$ be the Dunkl heat kernel, introduced in \cite{rosler1}, defined for every $t>0$ and every $x,y\in\R^d$ by
\begin{equation}\label{intrep}
p_t^k(x,y)=\frac{c_k^2}{2^m}\int_{\R^d}e^{-t|\xi|^2}E_k(-ix,\xi)E_k(iy,\xi)w_k(\xi)d\xi,
\end{equation}
where $$
c_k=\left(\,\int_{\R^d}e^{-|y|^2}w_k(y)\,dy\right)^{-1}.
$$
For every $x,y\in \R^d$, $0<p_t^k(x,y)$, $p_t^k(x,y)=p_t^k(y,x)$ and
\begin{equation}\label{ptk}
p_t^k(x,y)\leq \frac{c_k}{(4t)^{\frac{m}{2}}}
\exp\left(-\frac{\left(|x|-|y|\right)^2}{4t}\right).
\end{equation}
Also, for every $x\in\R^d$, the function $(t,y)\to p_t^k(x,y)$ solves  the generalised heat equation $\partial_tu-\Delta_ku=0$ on $]0,\infty[\times \R^d$. More precisely, the following holds

\begin{equation}\label{chal}
\frac{\partial}{\partial t}p_t^k(x,y)= \Delta_k\left(p_t^k(\cdot,y)\right)(x)=\Delta_k\left( p_t^k(x,\cdot )\right)(y).
\end{equation}
For every  $f\in C_0(\R^d)$ and $t>0$ let
$$
P^k_tf(x)=\int_{\R^d}p_t^k(x,y)f(y)w_k(y)\,dy,\;x\in\R^d.
$$
Then $(P_t^k)_{t>0}$ forms   a positive strongly continuous contraction semigroup on $C_0(\R^d)$ of generator $\Delta_k$.
This fact  yields the existence of a  Hunt process $(X_t,P^x)$ (see \cite[Theorem I.9.4]{blum}), called Dunkl process, with state space $\R^d$ and  transition kernel
$$
P_t^k(x,dy)= p_t^k(x,y)w_k(y)\,dy.
$$
 %Furthermore, Dunkl Laplacian generates a balayage space in the sense of \cite{hansen}, where four equivalent descriptions of potential theory using   %balayage spaces, families of harmonic kernels, sub-Markov semigroups and  Markov processes have been developed.
\section{Harmonic Kernels}
For every  bounded open subset $D$ of $\R^d$, we denote by  $\tau_D$   the first exit time from $D$ by $(X_t),$
 i.e.,
$$
\tau_D=\inf\left\{t>0;X_t\notin D\right\}.
$$
\begin{lemma}\label{taus}
Let $D$ be a bounded open set. Then, for every $x\in D$, $P^x\left(0<\tau_D<\infty\right)=1$.
\end{lemma}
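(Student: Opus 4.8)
The statement decomposes into two independent assertions, namely that $\tau_D>0$ and that $\tau_D<\infty$, both $P^x$-almost surely; the plan is to treat them separately, the first by soft path regularity and the second by the Gaussian-type upper bound~(\ref{ptk}) on the heat kernel. I would begin by disposing of the easy half.

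For $P^x(\tau_D>0)=1$, I would invoke the fact that $(X_t)$ is a Hunt process, so its trajectories are right-continuous and $X_0=x$ holds $P^x$-almost surely. Since $D$ is open and $x\in D$, right-continuity at $t=0$ forces $X_t\in D$ for all sufficiently small $t>0$; thus there is a (random) $\delta>0$ with $X_t\in D$ on $[0,\delta)$, whence $\tau_D\geq\delta>0$ almost surely. This step is routine and presents no real difficulty.

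The substance of the lemma lies in $P^x(\tau_D<\infty)=1$, which I would establish through a direct decay estimate on the transition density. The key observation is the inclusion $\{\tau_D=\infty\}\subseteq\{X_t\in D\}$, valid for every fixed $t>0$ and immediate from the definition of $\tau_D$ as a first exit time: if the process never leaves $D$, then in particular $X_t\in D$. Consequently, using the transition kernel $P_t^k(x,dy)=p_t^k(x,y)w_k(y)\,dy$,
$$P^x(\tau_D=\infty)\leq P^x(X_t\in D)=\int_D p_t^k(x,y)\,w_k(y)\,dy$$
for all $t>0$. Bounding $p_t^k$ by~(\ref{ptk}) and discarding the exponential factor (which is at most $1$), the right-hand side is dominated by $\dfrac{c_k}{(4t)^{m/2}}\displaystyle\int_D w_k(y)\,dy$. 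Since $D$ is bounded and $w_k$ is locally bounded, the integral $\int_D w_k(y)\,dy$ is finite, while $m>2$ guarantees that the prefactor $(4t)^{-m/2}\to 0$ as $t\to\infty$. Letting $t\to\infty$ therefore yields $P^x(\tau_D=\infty)=0$.

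Combining the two steps gives $P^x(0<\tau_D<\infty)=1$. The main conceptual point, rather than any computational obstacle, is to recognize that the finiteness of $\tau_D$ is controlled entirely by the global spatial decay of the transition density encoded in~(\ref{ptk}), and hence requires no information about the geometry or boundary regularity of $D$ beyond its boundedness; the only inequality needing a moment's justification is the set inclusion $\{\tau_D=\infty\}\subseteq\{X_t\in D\}$, which is elementary.
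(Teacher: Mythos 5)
Your proof is correct, but for the half that carries the weight of the lemma, namely $P^x(\tau_D<\infty)=1$, you take a genuinely different and more elementary route than the paper. The paper proves the stronger statement $E^x[\tau_D]<\infty$: it bounds $E^x[\tau_D]$ by $\int_0^\infty\int_{B_r}p_t^k(x,y)w_k(y)\,dy\,dt$ for a ball $B_r\supset D$ and then evaluates this integral \emph{explicitly}, passing through the integral representation~(\ref{intrep}) of the heat kernel, the formula of R\"osler for $\int_{S^{d-1}}E_k(iz,y)w_k(y)\,\sigma(dy)$ in terms of Bessel functions, and a Weber--Schafheitlin--type identity, arriving at the closed form in~(\ref{locb1}). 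You instead observe the elementary inclusion $\{\tau_D=\infty\}\subseteq\{X_t\in D\}$ for each fixed $t$, apply the crude bound $p_t^k(x,y)\leq c_k(4t)^{-m/2}$ from~(\ref{ptk}), and let $t\to\infty$; this gives $P^x(\tau_D=\infty)=0$ with no special-function machinery and in fact needs only $m>0$ rather than $m>2$. Both arguments are sound. What your shortcut loses is the by-product: the explicit value of $\int_0^\infty\int_{B_r}p_t^k(x,y)w_k(y)\,dy\,dt$, i.e.\ formula~(\ref{locb1}), is not a throwaway computation in the paper --- it is reused verbatim to derive the Green potential formula~(\ref{g1b}), which in turn underpins the compactness and boundedness estimates for $G^k_B$ later on. So if one adopted your proof of the lemma, the Bessel computation would still have to be carried out elsewhere; as a proof of the lemma as stated, however, yours is complete and shorter.
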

\begin{proof} Let $x\in D$.
 Since Dunkl process has right continuous paths, we immediately conclude that $P^x(0<\tau_D)=1$.
 Let $r>0$ such that $D\subset B_r$ the ball of center $0$ and radius $r$. Clearly,
\begin{eqnarray*}
 E^x[\tau_D]\leq E^x[\tau_{B_r}]&=& E^x\left[\int_0^{\tau_{B_r}}1_{B_r}(X_t)\,dt\right]\\
   &\leq&  \int_0^{\infty}E^x[1_{B_r}(X_t)]dt\\
   &=&\int_0^\infty \int_{B_r} p_t^k(x,y)w_k(y)\,dy\, dt.
   \end{eqnarray*}
   So, to prove that $P^x(\tau_D<\infty)=1$, it will be sufficient to shows that
   $$
   \int_0^\infty \int_{B_r} p_t^k(x,y)w_k(y)\,dy\, dt<\infty.
   $$
Using spherical coordinates and seeing that the function $w_k$ is homogenous of degree $m-d$, it follows from the integral representation (\ref{intrep}) of $p^k_t$ that, for every $y\in\R^d$,
\begin{eqnarray*}
p_t^k(x,y)&=&\frac{c_k^2}{2^m}\int_{0}^\infty\int_{S^{d-1}} e^{-ts^2} E_k(-ix,s\xi) E_k(iy,s\xi)w_k(\xi)s^{m-1}\sigma(d\xi) ds,
\end{eqnarray*}
where  $\sigma$ denotes the surface area measure on the unit sphere $S^{d-1}$ of $\R^d$. Therefore,
$$\int_0^\infty p_t^k(x,y)dt= \frac{c_k^2}{2^m}\int_{0}^\infty\int_{S^{d-1}}  E_k(-ix,s\xi) E_k(iy,s\xi)w_k(\xi)s^{m+1}\sigma(d\xi) ds.$$
Using again spherical coordinates and then applying Fubini's theorem, we get
\begin{eqnarray*}
\int_0^\infty \int_{B_r} p_t^k(x,y)w_k(y)\,dy\, dt
&=& \int_0^r\int_{S^{d-1}} \left(\int_0^\infty p_t^k(x,uy)dt\right)w_k(y)u^{m-1}\sigma(dy)du\\
&=&\frac{c_k^2}{2^m}\int_0^r\int_0^\infty \int_{S^{d-1}}\left(\int_{S^{d-1}}E_k(iuy,s\xi)w_k(y)\sigma(dy)\right)\\
& &E_k(-ix,s\xi) w_k(\xi)s^{m+1}u^{m-1}\sigma(d\xi)dsdu.
\end{eqnarray*}
On the other hand, we recall from \cite{rosler3} that
$$
 \int_{S^{d-1}}E_k(iz,y)w_k(y)\sigma(dy)=2^{\frac m2}c_k^{-1}\frac{J_{\frac m2 -1}(|z|)}{ |z|^{\frac m2-1}},
 $$
 where $J_{\frac m2 -1}$ is the Bessel function of index $\frac m2-1$ given by
 $$
J_{\frac m2 -1}(z):=\left(\frac{z}{2}\right)^{\frac m2-1}\sum_{n=0}^\infty\frac{(-1)^nz^{2n}}{4^nn!\Gamma(n+\frac m2)}.
$$
  Whence
\begin{eqnarray}
\int_0^\infty \int_{B_r} p_t^k(x,y)w_k(y)\,dy\, dt&=&\int_0^r \frac{u^{m-1}}{(u|x|)^{\frac m2-1}}\left(\int_0^\infty J_{\frac{m}2-1}(s|x|) J_{\frac m2-1}(us)s^{-1}ds\right) du\nonumber\\
&=& \frac{1}{m-2} \int_0^r u^{m-1}\left(\max(u,|x|)\right)^{2-m}\,du\label{locb1}\\
&=&
\frac{1}{m-2}\left(\frac{|x|^2}{m}+\frac{r^2-|x|^2}{2}\right).\nonumber
\end{eqnarray}
 In order to get~(\ref{locb1}) one should think about  formula in \cite[p.100]{magnus}.
\end{proof}

For every bounded open set $D$,
we define
  $$
\w:=\cup_{w\in W}w(D)\quad \mbox{and}\quad \Gamma_D:=\overline{\w}\setminus D.
$$
That is, $\w$ is the smallest open bounded set  containing $D$ which is invariant under the reflection group $W$. In the following theorem, we show that if the process start from $x\in D$ then, at the first exit time from $D$, it can not be anywhere outside the domain $D$ but it should be in the compact $\Gamma_D$.

\begin{theorem}\label{support}
Let $D$ be a bounded open subset of $\R^d$. Then for every $x\in D$,
\begin{equation}\label{gmd}
P^x\left(X_{\tau_D}\in\Gamma_D\right)=1.
\end{equation}
In particular, if $D$ is $W$-invariant, i.e., $\w=D$, then $\Gamma_D=\partial D$ and therefore $P^x\left(X_{\tau_D}\in\partial D\right)=1$.
\end{theorem}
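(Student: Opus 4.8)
The plan is to split the exit position $X_{\tau_D}$ according to whether the process leaves $D$ continuously or by a jump, and to prove separately the two inclusions $X_{\tau_D}\notin D$ and $X_{\tau_D}\in\overline{\w}$; together they give $X_{\tau_D}\in\overline{\w}\setminus D=\Gamma_D$. The inclusion $X_{\tau_D}\notin D$ is the soft part: since $D^c$ is closed and the Dunkl process is right continuous, one chooses $t_n\downarrow\tau_D$ with $X_{t_n}\in D^c$ (such $t_n$ exist by the very definition of $\tau_D$ as an infimum) and passes to the limit along right continuity to obtain $X_{\tau_D}\in D^c$ on $\{\tau_D<\infty\}$, which by Lemma~\ref{taus} is an almost sure event. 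Hence the whole content of the theorem lies in establishing $X_{\tau_D}\in\overline{\w}$.

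The geometric input I would exploit is that the Dunkl process jumps only along reflections. Indeed, the non-local part of $\Delta_k$ in~(\ref{lapd}) is $\sum_{\alpha\in R}k(\alpha)\frac{|\alpha|^2}{2\langle\alpha,x\rangle^2}\bigl(u(\sigma_\alpha x)-u(x)\bigr)$, so the L\'evy kernel of $(X_t)$ is carried by the reflected points $\{\sigma_\alpha(x):\alpha\in R\}$; this is precisely the jump description of the Dunkl process worked out in~\cite{gy}. Consequently, almost surely, at every jump time $s$ one has $X_s=\sigma_\alpha(X_{s-})$ for some $\alpha\in R$. (As a byproduct $|X_s|=|X_{s-}|$, so the radial part of the process is continuous; I will not need this here, but it is the same mechanism.)

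With this in hand I would run the following dichotomy at the exit time. Since $0<\tau_D$ almost surely and the paths are right continuous with left limits, the left limit $X_{\tau_D-}=\lim_{t\uparrow\tau_D}X_t$ exists, and because $X_t\in D$ for every $t<\tau_D$ we get $X_{\tau_D-}\in\overline{D}$. If $\tau_D$ is not a jump time, then $X_{\tau_D}=X_{\tau_D-}\in\overline{D}\subseteq\overline{\w}$ (and in fact $X_{\tau_D}\in\partial D$, since it also lies in $D^c$). If $\tau_D$ is a jump time, then $X_{\tau_D}=\sigma_\alpha(X_{\tau_D-})$ for some $\alpha\in R$, whence $X_{\tau_D}\in\sigma_\alpha(\overline{D})=\overline{\sigma_\alpha(D)}\subseteq\overline{\w}$, the last inclusion because $\sigma_\alpha\in W$ and therefore $\sigma_\alpha(D)\subseteq\cup_{w\in W}w(D)=\w$. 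In both cases $X_{\tau_D}\in\overline{\w}$, which combined with $X_{\tau_D}\notin D$ yields~(\ref{gmd}).

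The particular case is then immediate: if $D$ is $W$-invariant, then $\w=\cup_{w\in W}w(D)=D$, so $\Gamma_D=\overline{D}\setminus D=\partial D$ and~(\ref{gmd}) reads $P^x(X_{\tau_D}\in\partial D)=1$. The one genuinely delicate point, and where I expect the real work to sit, is the passage from the formal reading of the difference term of $\Delta_k$ to a rigorous statement about the trajectories: that every jump of $(X_t)$ is a reflection requires identifying the L\'evy system of the Dunkl process, which is why I would invoke~\cite{gy} rather than reprove it. Everything else is path regularity together with the elementary observation that $\w$ is stable under each $\sigma_\alpha$.
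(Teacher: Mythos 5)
Your proposal is correct and follows essentially the same route as the paper: the key input in both is the result of Gallardo--Yor that almost surely every jump of the Dunkl process is a reflection $X_s=\sigma_\alpha(X_{s^-})$, combined with $X_{\tau_D^-}\in\overline{D}$, $X_{\tau_D}\in D^c$, and the finiteness of $\tau_D$ from Lemma~\ref{taus}. The paper merely packages the jump/no-jump dichotomy through the functional $Y_t=\sum_{s<t}1_{\{X_{s^-}\neq X_s\}}\digamma(X_{s^-},X_s)$ and the statement $P^x(Y_t=0)=1$ from \cite[Proposition 3.2]{gy}, which is exactly the trajectory statement you isolate as the delicate point.
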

\begin{proof} Let $x\in D$  and consider the function $\digamma$ defined for every $y,z\in\R^d$ by $\digamma(y,z)=0$ if $z\in\{\sigma_\alpha y;\alpha\in R\}$ and $\digamma(y,z)=1$ otherwise. Let
 $$
 Y_t:=\sum_{s<t}1_{\{X_{s^-}\neq X_s\}}\digamma(X_{s^-},X_s), \quad t>0.
 $$
 It follows from \cite[Proposition 3.2]{gy} that for every $t>0$, $P^x(Y_t=0)=1$ and consequently
  $$
  P^x\left(1_{\{X_{s^-}\neq X_s\}}\digamma(X_{s^-},X_s)=0;\forall s>0\right)=1.
  $$
  Then, since  $P^x(0<\tau_D<\infty)=1$ we deduce that
  $$
 P^x\left(1_{\{X_{\tau_D^-}\neq X_{\tau_D}\}} \digamma(X_{\tau_D^-},X_{\tau_D})=0\right)=1.
  $$
  On the other hand, seeing that $X_{\tau_D^-}\in \overline D$ on $\{0<\tau_D<\infty\}$ we have
  $$
  \left\{X_{\tau_D}\not\in \Gamma_D, 0<\tau_D<\infty\right\}\subset
  \left\{ 1_{\{X_{\tau_D^-}\neq X_{\tau_D}\}} \digamma(X_{\tau_D^-},X_{\tau_D})=1  \right\}.
  $$
  This finishes the proof.
 \end{proof}

For every bounded open set $D$ and every $x\in \R^d$, let $H_D(x,\cdot)$ be the harmonic measure relative to $x$ and $D$, i.e., for every Borel set $A$,
$$
H_D(x,A):=P^x\left(X_{\tau_D}\in A\right).
$$
 For every $f\in \cc{B}_b(\R^d)$, let  $H_Df$ be the function defined on $\R^d$ by
 $$
 H_Df(x)=
\int f(y) H_D(x,dy).
$$
Since, for $x\in D$, the harmonic measure $H_D(x,\cdot)$ is supported by the compact set $\Gamma_D$, it will be convenient to denote again
\begin{equation}\label{hd}
 H_Df(x)=
\int f(y) H_D(x,dy),\quad x\in D,
\end{equation}
for every $f\in \cc{B}_b(\Gamma_D)$.

\par Let $^*\mathcal H^+(\R^d)$ denotes the set of all non negative lower semi-continuous function $f$ on $\R^d$ such that
$$H_Df\leq f\quad\mbox{ for every bounded open set }D.$$
 Since $(\R^d,P^x)$ is a Hunt process, it follows from \cite[Theorem IV.8.1]{hansen} that $(\R^d, ^*\mathcal H^+(\R^d))$ is a balayage space.
Hence, it follows from the general theory of balayage spaces that for every $f\in \cc B_b(\Gamma_D)$
\begin{equation}\label{vd}
H_Df\in C(D)
\end{equation}
and
\begin{equation}\label{vdd}
H_VH_Df=H_Df \quad\textrm{on } V\mbox{ for all open set } V\mbox{ such that }\overline V\subset D.
\end{equation}
Furthermore, a function $f\in \cc B^+(\R^d)$ belongs to $^+\mathcal H^+(\R^d)$ if and only if $\sup_{t>0} P_t^kf=f$.
\par  Let us now introduce the Green function $G^k$ of the Dunkl Laplacian operator which  will play an important role in
our approach. It is defined for every $x,y\in \R^d$ by
$$
G^k(x,y)=\int_0^\infty p_t^k(x,y)dt.
$$
 For every $y\in \R^d$, the function $G^k_y:=G^k(\cdot,y)\in ^*\mathcal H^+(\R^d)$. Indeed, it is not hard to see that
$$P_t^kG^k_y(x)=\int_t^\infty p_s^k(x,y)ds\leq G^k(x,y).$$
Hence, the map $t\mapsto P_t^kG^k_y$ is decreasing on $]0,\infty[$ and so
$$\sup_{t>0}P_t^kG^k_y=\lim_{t\rightarrow 0} P_t^kG^k_y=G^k_y.$$
Whence $G^k_y\in\,^*\mathcal H^+(\R^d)$ which means that
\begin{equation}\label{ghd}
\int G^k(z,y)H_D(x,dz)\leq G^k(x,y).
\end{equation}
Furthermore, it is obvious that $G^k$ is positive and symmetric on $\R^d\times\R^d$. Therefore, it follows from \cite[Theorem VI.1.16]{blum} that for every bounded open set $D$  and   every $x,y\in \R^d$,
\begin{equation}\label{blum}
\int G^k(x,z)H_D(y,dz)=\int G^k(y,z)H_D(x,dz) .
\end{equation}
\section{ Dirichlet problem}

Let $B$ be an open ball of $\R^d$ of center $0$ and radius $r>0$. We first introduce three kinds of harmonicity  on $B$.

A continuous function $h: B\to\R$ is said to be:
\begin{enumerate}
\item $\Delta_k$-harmonic on $B$ if
$
h\in C^2(B)\;\textrm{ and }\;\Delta_kh(x)=0\;\textrm{ for every }\;x\in B.
$
\item $X$-harmonic on $B$ if
$
H_Dh(x)=h(x) \;\textrm{ for every bounded open set }\; D \; \textrm{ such that }\; \overline{D}\subset B\;\textrm{ and every }\;x\in D.
$
\item $\Delta_k$-harmonic on $B$ in the distributional sense if
$$
\langle h, \Delta_k\varphi\rangle_k:=\int_B h(x)\Delta_k\varphi(x)w_k(x)dx=0\quad \textrm{for all}\;\;\varphi\in C^\infty_c(B).
$$
\end{enumerate}

The following two technical lemmas are important in our approach.
\begin{lemma}
Let $f\in C^2_c(\R^d)$. For every $x\in \R^d$,
\begin{equation}\label{inv}
 \int_{\R^d} G^k(x,y)\Delta_kf(y)w_k(y)dy=-f(x).
\end{equation}
In particular, for every bounded open set $D$ and every $x\in D$,
\begin{equation}\label{dynk}
H_Df(x)-f(x)= E^x\left[\int_0^{\tau_D}\Delta_kf(X_s) ds\right].
\end{equation}
\end{lemma}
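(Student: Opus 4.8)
The plan is to prove the two identities in sequence, deriving the second from the first together with the heat-kernel machinery already set up. The key tools are the definition $G^k(x,y)=\int_0^\infty p_t^k(x,y)\,dt$, the heat equation (\ref{chal}), the symmetry property (\ref{ti}) of $\Delta_k$, and the probabilistic representation of the semigroup via the transition kernel $P_t^k(x,dy)=p_t^k(x,y)w_k(y)\,dy$.

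First I would establish (\ref{inv}). For $f\in C^2_c(\R^d)$, I would compute $P_t^k f$ and use that $(P_t^k)_{t>0}$ has generator $\Delta_k$, so that $\frac{\partial}{\partial t}P_t^k f=P_t^k(\Delta_k f)$; equivalently, differentiating under the integral and invoking (\ref{chal}) gives $\frac{\partial}{\partial t}P_t^k f(x)=\int_{\R^d}\Delta_k\bigl(p_t^k(x,\cdot)\bigr)(y)f(y)w_k(y)\,dy$, which by the symmetry (\ref{ti}) equals $\int_{\R^d}p_t^k(x,y)\Delta_k f(y)w_k(y)\,dy=P_t^k(\Delta_k f)(x)$. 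Then I would integrate this relation in $t$ over $]0,\infty[$ and apply the fundamental theorem of calculus:
\begin{equation*}
\int_0^\infty P_t^k(\Delta_k f)(x)\,dt=\lim_{t\to\infty}P_t^k f(x)-\lim_{t\to 0}P_t^k f(x).
\end{equation*}
Since $f\in C_0(\R^d)$ the strong continuity of the semigroup gives $\lim_{t\to 0}P_t^k f=f$, while the Gaussian-type bound (\ref{ptk}) forces $\lim_{t\to\infty}P_t^k f(x)=0$. The left-hand side, after a Fubini interchange justified by compact support of $f$ together with the estimates already available, is exactly $\int_{\R^d}G^k(x,y)\Delta_k f(y)w_k(y)\,dy$, yielding $-f(x)$.

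Next I would derive (\ref{dynk}) from (\ref{inv}). The natural route is Dynkin's formula: for the Hunt process $(X_t,P^x)$ with generator $\Delta_k$ and a test function $f\in C^2_c(\R^d)$, the expected increment of $f$ up to the stopping time $\tau_D$ is $E^x[f(X_{\tau_D})]-f(x)=E^x\bigl[\int_0^{\tau_D}\Delta_k f(X_s)\,ds\bigr]$; since $H_D f(x)=E^x[f(X_{\tau_D})]$ by the definition of the harmonic measure, this is precisely (\ref{dynk}). To make this rigorous rather than merely invoke Dynkin, I would verify that $\tau_D$ has finite expectation by Lemma~\ref{taus} and its proof, which already bounds $E^x[\tau_D]$ by the finite integral $\int_0^\infty\int_{B_r}p_t^k(x,y)w_k(y)\,dy\,dt$; this integrability is what licenses the optional-stopping step and the interchange of expectation and the time integral.

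The main obstacle is the rigorous justification of the limit and interchange arguments in the first part, rather than any conceptual difficulty. Specifically, exchanging $\frac{\partial}{\partial t}$ with the spatial integral and exchanging $\int_0^\infty dt$ with $\int_{\R^d}w_k(y)\,dy$ both require dominated-convergence bounds that are uniform enough to cover the singular weight $w_k$ near the reflection hyperplanes and the behaviour of $\Delta_k f$, which contains the nonlocal difference term. The compact support of $f$ controls the spatial integral, and the bound (\ref{ptk}) controls the $t\to\infty$ decay; the delicate point is the local integrability of $G^k(x,\cdot)w_k$ near $y=x$, needed for the $t\to 0$ endpoint, which follows from $m>2$ via the same Bessel-function estimate used to obtain (\ref{locb1}) in Lemma~\ref{taus}. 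Once these integrability facts are in hand, both identities follow cleanly.
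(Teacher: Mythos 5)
Your proof of (\ref{inv}) is essentially the paper's: both arguments reduce the left-hand side via Fubini to $\int_0^\infty P_t^k(\Delta_k f)(x)\,dt$, use (\ref{ti}) to throw $\Delta_k$ onto the kernel, identify $\Delta_k\bigl(p_t^k(\cdot,y)\bigr)(x)$ with $\partial_t p_t^k(x,y)$ via (\ref{chal}), and conclude by the fundamental theorem of calculus with $\lim_{t\to 0}P_t^kf=f$ and $\lim_{t\to\infty}P_t^kf=0$ from (\ref{ptk}); you merely read the computation in the opposite order. For (\ref{dynk}) you diverge: you invoke the general Dynkin formula for the Hunt process, with optional stopping licensed by $E^x[\tau_D]<\infty$ from Lemma~\ref{taus}. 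The paper instead derives (\ref{dynk}) directly from (\ref{inv}): it writes $-f(x)=E^x\bigl[\int_0^\infty\Delta_kf(X_s)\,ds\bigr]$, splits the integral at $\tau_D$, and applies the strong Markov property so that the tail becomes $E^x\bigl[E^{X_{\tau_D}}\bigl[\int_0^\infty\Delta_kf(X_s)\,ds\bigr]\bigr]=-H_Df(x)$ by a second application of (\ref{inv}). The difference matters slightly: the statement being proved \emph{is} a Dynkin-type formula, and citing Dynkin's theorem requires knowing that $C^2_c(\R^d)$ lies in the domain of the $C_0$-generator of $(P_t^k)$, a fact the paper asserts only loosely and never pins down; the paper's route sidesteps this entirely, needing only the strong Markov property and the integrability of $G^k(x,\cdot)w_k$ on compacts (already secured in Lemma~\ref{taus}). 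Your version is correct as a citation of standard theory, but the paper's is more self-contained; if you want to keep your route, you should at least justify the domain membership, not only the finiteness of $E^x[\tau_D]$.
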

\begin{proof}
Let $x\in \R^d$. Using Fubini's theorem and formulas (\ref{ti}) and (\ref{chal}),  we have
 \begin{eqnarray*}
\int_{\R^d} G^k(x,y)\Delta_kf(y)w_k(y)\,dy &=& \int_0^\infty \int_{\R^d} p_t^k(x,y) \Delta_kf(y)w_k(y)\,dy\,dt\\
&=& \int_0^\infty \int_{\R^d} \Delta_k\left(p_t^k(x,\cdot)\right)(y) f(y)w_k(y)\,dy\,dt\\
&=& \int_0^\infty \int_{\R^d} \Delta_k\left(p_t^k(\cdot,y)\right)(x) f(y)w_k(y)\,dy\,dt\\
&=& \lim_{t\to\infty}P_t^kf(x)-\lim_{t\to 0}P^k_tf(x)\\
&=& -f(x).
 \end{eqnarray*}
 To get $\lim_{t\to\infty}P_t^kf(x)=0$ we only use (\ref{ptk}) and the fact that $f$ is with compact support.
Formula (\ref{dynk}) follows from (\ref{inv}) and the strong Markov property. In fact, let $D$ be a bounded open set and let $x\in D$. Then
\begin{eqnarray*}
-f(x) &=& \int G^k(x,y)\Delta_kf(y)w_k(y)dy\\
&=& \int_0^\infty\int p_t^k(x,y)\Delta_kf(y)w_k(y)dy dt\\
&=& E^x\left[\int_0^\infty\Delta_kf(X_s) ds\right]\\
&=& E^x\left[\int_0^{\tau_D}\Delta_kf(X_s) ds\right]+E^x\left[\int_{\tau_D}^\infty\Delta_kf(X_s) ds\right]\\
&=& E^x\left[\int_0^{\tau_D}\Delta_kf(X_s) ds\right]+E^x\left[E^{X_{\tau_D}}\left[\int_0^\infty\Delta_kf(X_s) ds\right]\right]\\
&=& E^x\left[\int_0^{\tau_D}\Delta_kf(X_s) ds\right]+E^x\left[-f(X_{\tau_D})\right]\\
&=& E^x\left[\int_0^{\tau_D}\Delta_kf(X_s) ds\right]-H_Df(x).
\end{eqnarray*}
\end{proof}
\begin{lemma}
For every  bounded open set $D$ and
 for every $\varphi,\psi\in C^2_c(\R^d)$,
\begin{equation}\label{td}
    \langle H_D\psi, \Delta_k\varphi\rangle_k = \langle \Delta_k\psi,H_D\varphi\rangle_k.
\end{equation}
\end{lemma}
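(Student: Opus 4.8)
The plan is to rewrite both sides of (\ref{td}) as one and the same symmetric double integral, the symmetry coming from the reciprocity relation (\ref{blum}). Since $\psi,\varphi\in C^2_c(\R^d)$, the representation formula (\ref{inv}) gives, for every $z\in\R^d$,
$$\psi(z)=-\int_{\R^d}G^k(z,y)\,\Delta_k\psi(y)\,w_k(y)\,dy,$$
and the analogous identity for $\varphi$. Integrating the first identity against the harmonic measure $H_D(x,\cdot)$ and interchanging the order of integration (the justification is discussed below), I obtain
$$H_D\psi(x)=-\int_{\R^d}u(x,y)\,\Delta_k\psi(y)\,w_k(y)\,dy,\qquad u(x,y):=\int G^k(z,y)\,H_D(x,dz),$$
and, in the same way, $H_D\varphi(y)=-\int_{\R^d}u(y,x)\,\Delta_k\varphi(x)\,w_k(x)\,dx$.

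The heart of the argument is the symmetry of the kernel $u$. Using first the symmetry of $G^k$ on $\R^d\times\R^d$ and then the reciprocity identity (\ref{blum}),
$$u(x,y)=\int G^k(y,z)\,H_D(x,dz)=\int G^k(x,z)\,H_D(y,dz)=u(y,x).$$
Substituting the two representations into the definition of the $\langle\cdot,\cdot\rangle_k$ pairing, I find that both inner products equal
$$-\int_{\R^d}\int_{\R^d}u(x,y)\,\Delta_k\psi(y)\,\Delta_k\varphi(x)\,w_k(x)\,w_k(y)\,dx\,dy,$$
the left-hand side $\langle H_D\psi,\Delta_k\varphi\rangle_k$ producing the factor $u(x,y)$ and the right-hand side $\langle\Delta_k\psi,H_D\varphi\rangle_k$ producing the factor $u(y,x)$; since $u(x,y)=u(y,x)$ these coincide, which is exactly (\ref{td}).

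The main obstacle is the rigorous justification of the two applications of Fubini's theorem. For this I would invoke the inequality (\ref{ghd}), which yields $0\le u(x,y)\le G^k(x,y)$, together with the facts that $\Delta_k\psi$ and $\Delta_k\varphi$ are bounded and compactly supported and that each $H_D(x,\cdot)$ is a probability measure carried by the compact set $\Gamma_D$. It then remains only to verify that $G^k$ is integrable against $w_k(x)\,w_k(y)\,dx\,dy$ over compact subsets of $\R^d\times\R^d$; this follows from the Gaussian bound (\ref{ptk}) on $p^k_t$ by the same computation as in the proof of Lemma \ref{taus}, the hypothesis $m>2$ ensuring that the diagonal singularity of $G^k$ is integrable.
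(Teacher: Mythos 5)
Your argument is correct and is essentially the paper's own proof: both apply the representation formula (\ref{inv}) to $\psi$ (and then to $\varphi$), pass the harmonic measure inside via Fubini, and invoke the reciprocity identity (\ref{blum}) to identify the two sides with one symmetric triple integral. Your only addition is the explicit justification of Fubini via (\ref{ghd}) and the local integrability of $G^k$, which the paper leaves implicit.
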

\begin{proof}
 Applying formula (\ref{inv}) to $\psi$, we get
\begin{equation}\label{psifi}
\langle H_D\psi, \Delta_k\varphi\rangle_k= -\int\int\int G^k(z,y)\Delta_k\psi(y)w_k(y)dyH_D(x,dz)\Delta_k\varphi(x)w_k(x)dx.
\end{equation}
Then (\ref{td}) is obtained by Fubini's theorem using formula (\ref{blum}) and formula (\ref{inv}) applied to $\varphi$.
\end{proof}
%Here, we note that formula (\ref{g1b}) below justify the use of Fubini's theorem to prove (\ref{inv}) and (\ref{td}) above.
Now, we  prove that the three kinds of harmonicity on $B$, introduced in the beginning of this section, are equivalent.
\begin{theorem}\label{fond1}
Let $h\in C(B)$. The following three assertions are equivalent.
\begin{enumerate}
\item $h$ is $\Delta_k$-harmonic on $B$.
\item $h$ is $X$-harmonic on $B$.
\item $h$ is $\Delta_k$-harmonic on $B$ in the distributional sense.
\end{enumerate}
\end{theorem}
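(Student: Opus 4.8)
The plan is to prove the three assertions equivalent cyclically, $(1)\Rightarrow(2)\Rightarrow(3)\Rightarrow(1)$. The decisive structural fact I would exploit throughout is that, $B$ being a ball centered at the origin, it is $W$-invariant; hence for every $D$ with $\overline D\subset B$ the whole orbit $\overline{\w}=\cup_{w\in W}w(\overline D)$ is again a compact subset of $B$, and for a $W$-invariant $D$ one has $\Gamma_D=\partial D$ by Theorem~\ref{support}. The first two implications are soft and rest on the formulas already established; the last one is the genuine regularity statement and is where the work lies.

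For $(1)\Rightarrow(2)$ I would fix $D$ with $\overline D\subset B$ and $x\in D$, and choose a cutoff $\chi\in C_c^\infty(B)$ equal to $1$ on a neighborhood of the compact set $\overline{\w}$. Then $f:=\chi h$ (extended by $0$) lies in $C_c^2(\R^d)$ and coincides with $h$, together with its derivatives, near $\overline{\w}$. Since for $x\in D$ all reflected points $\sigma_\alpha x$ belong to $\w\subset\{f=h\}$, one gets $\Delta_k f(x)=\Delta_k h(x)=0$ for every $x\in D$. Feeding $f$ into Dynkin's formula~(\ref{dynk}) and using that $X_s\in D$ for $s<\tau_D$ annihilates the right-hand side, so $H_Df=f=h$ on $D$; because $H_D(x,\cdot)$ is carried by $\Gamma_D$ (Theorem~\ref{support}), where $f=h$, this says precisely $H_Dh=h$, i.e. $X$-harmonicity.

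For $(2)\Rightarrow(3)$, given $\varphi\in C_c^\infty(B)$ I would pick a $W$-invariant ball $D$ centered at $0$ with $\overline D\subset B$ and $\mathrm{supp}\,\varphi\subset D$. Then $\Delta_k\varphi$ is still supported in $D$ (its difference part only adjoins the orbit of $\mathrm{supp}\,\varphi$, which stays inside the invariant set $D$), whereas $\varphi$ vanishes on $\Gamma_D=\partial D$ and off $D$, so $H_D\varphi\equiv 0$. The duality identity~(\ref{td}) then yields $\langle H_D\psi,\Delta_k\varphi\rangle_k=\langle\Delta_k\psi,H_D\varphi\rangle_k=0$ for every $\psi\in C_c^2(\R^d)$. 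Approximating $h$ uniformly on the compact set $\partial D$ by functions $\psi_n\in C_c^2(\R^d)$, the probabilistic nature of the harmonic measure gives $H_D\psi_n\to H_Dh$ uniformly on $D$; letting $n\to\infty$ and invoking $H_Dh=h$ produces $\langle h,\Delta_k\varphi\rangle_k=\int_D h\,\Delta_k\varphi\,w_k\,dx=0$, which is exactly assertion~(3).

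The implication $(3)\Rightarrow(1)$ is the hypoellipticity step and is the main obstacle. I would fix concentric $W$-invariant balls with $\overline{D_0}\subset D_1$, $\overline{D_1}\subset B$, and a radial (hence $W$-invariant) cutoff $\chi$ with $\chi\equiv 1$ on $D_0$ and $\mathrm{supp}\,\chi\subset D_1$. Since $\chi$ is $W$-invariant, the difference part of $\Delta_k$ commutes with multiplication by $\chi$, so $g:=\chi h\in C_c(\R^d)$ solves $\Delta_k g=\mu$ distributionally, where $\mu=[\Delta_k,\chi]h$ is an at-most-first-order distribution supported in the annulus $\overline{D_1}\setminus D_0$. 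Using the semigroup and~(\ref{chal}) I would write $g=P_s^k g-\int_0^s P_t^k\mu\,dt$ for $s>0$; for $x\in D_0$ the kernel $p_t^k(x,\cdot)$ is tested only against $\mu$, which sits at positive distance from the entire orbit $Wx$, so after integrating the first-order part of $\mu$ by parts against the smooth kernel the potential term becomes $\int_{\overline{D_1}\setminus D_0}h(y)\,Q_{s,x}(y)\,dy$ with $Q_{s,x}$ smooth, and differentiation under the integral shows $g=h$ is smooth on $D_0$, so that $\Delta_k h=0$ holds classically. The hard part is exactly this: it needs sharp off-(orbit-)diagonal smoothness and Gaussian decay of $p_t^k(x,y)$ and its $x$-derivatives, control strictly finer than the bound~(\ref{ptk}), plus a careful justification of the distributional manipulations. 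An equivalent route phrases everything through the Green kernel, representing $h$ on $D_0$ as a Green potential of $\mu$ via the fundamental-solution identity~(\ref{inv}) and reading off regularity from the smoothness of $G^k(x,\cdot)$ away from $Wx$; either way the crux is the fine behaviour of the Dunkl kernels off the reflecting hyperplanes.
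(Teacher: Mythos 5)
Your cyclic structure matches the paper's (which also proves $1\Rightarrow2\Rightarrow3\Rightarrow1$), and the first two implications are sound. Your $(2)\Rightarrow(3)$ is essentially identical to the paper's argument: a $W$-invariant $D$ containing $\mathrm{supp}\,\varphi$, the identity (\ref{td}), uniform approximation of $h$ on $\partial D$, and $h=H_Dh$. Your $(1)\Rightarrow(2)$, however, is a genuinely different and in fact cleaner route. The paper takes a cutoff $\theta$ equal to $1$ only on a neighbourhood $V$ of $\overline D$ (not of the orbit), so for $y\in D$ the reflected points $\sigma_\alpha y$ may land where $h\theta\neq h$; this forces the correction term $\psi=h-h\theta$, which the paper handles by computing $H_D\psi$ via the L\'evy kernel (\ref{lvk}) and the Ikeda--Watanabe formula (\ref{wat}) from \cite{iw}, and then cancelling it against the difference part of $\Delta_k(h\theta)$ in (\ref{wat1}). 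By instead choosing the cutoff equal to $1$ on a neighbourhood of the full orbit $\overline{\w}$ --- legitimate since $B$ is $W$-invariant, so $\overline{\w}$ is a compact subset of $B$ --- you get $\Delta_k(\chi h)=\Delta_kh=0$ on $D$ directly and conclude from (\ref{dynk}) and Theorem~\ref{support} alone, with no L\'evy-system input. That is a real simplification; what the paper's more laborious version buys is the intermediate Dynkin-type identity (\ref{dyn}) for arbitrary $C^2$ functions, not just harmonic ones, though it is not reused elsewhere.

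The gap is in $(3)\Rightarrow(1)$, and you have correctly located it but not closed it. The paper does not prove the regularity statement at all: it invokes the hypoellipticity of $\Delta_k$ on $W$-invariant open sets as a known theorem (\cite{kk1,mt2}), concludes $h\in C^\infty(B)$, and then kills $\Delta_kh$ by testing against $C_c^\infty(B)$ via (\ref{ti}). Your attempted self-contained substitute --- commuting a radial cutoff past the difference part, writing $g=P_s^kg-\int_0^sP_t^k\mu\,dt$, and differentiating under the integral --- founders exactly where you say it does: it requires smoothness and Gaussian-type decay of $y\mapsto p_t^k(x,y)$ and of its $x$-derivatives away from the orbit $Wx$, uniformly as $t\to0$, which is strictly more than (\ref{ptk}) provides, and it also needs a justification that $\Delta_k(\chi h)=\chi\Delta_kh+[\Delta_k,\chi]h$ holds distributionally for $h$ merely continuous. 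These are precisely the estimates whose proof constitutes the cited hypoellipticity theorem, so as written your argument assumes what it must prove. The efficient repair is simply to quote that theorem, as the paper does, rather than to rederive it.
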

\begin{proof}
\begin{enumerate}
\item Assume that $h$ is $\Delta_k$-harmonic on $B$.
Let $D$ be a bounded open set such that $\overline{D}\subset B$ and let $x\in D$. We  claim that
\begin{equation}\label{dyn}
   H_Dh(x)-h(x)=
    E^x\left[\int_0^{\tau_D}\Delta_k h(X_s)ds\right].
\end{equation}
Let $V$ be a bounded open set such that $\overline{D}\subset V\subset\overline{V}\subset B$. By $C^\infty$-Uryshon's lemma, there exists $\theta\in C^\infty_c(B)$ such that $\theta=1$ on $V$. Let $f:=h\theta$ and $\psi:=h-f$. Obviously, $h=f$ on $V$, $\psi=0$ on $V$ and  $f\in C^2_c(B)$.
Then using (\ref{dynk}) we obtain
\begin{equation}\label{dyn1}
H_Dh(x)-h(x)
  =  E^x\left[\int_0^{\tau_D}\Delta_kf(X_s)ds\right]+H_D\psi(x).
\end{equation}
For every $y\in\R^d$, let $N(y,dz)$ be the L\'evy kernel of the Dunkl process $X$ which is given in \cite{gy} by the following formula:
\begin{equation}\label{lvk}
N(y,dz)=
\sum_{\alpha\in R_+, \langle y,\alpha\rangle\neq0}\frac{k(\alpha)}{\langle\alpha,y\rangle^2}\delta_{\sigma_\alpha y}(dz).
\end{equation}
Since $\psi =0$ on $V$, it follows from \cite[Theorem 1]{iw} that
\begin{equation}\label{wat}
H_D\psi(x)=
E^x
 \left[\int_0^{\tau_D}\int \psi(z)N(X_s,dz)ds\right].
\end{equation}
On the other hand, by (\ref{lapd}) and (\ref{lvk})  we easily see that  for every $y\in D$,
\begin{equation}\label{wat1}
\Delta_kf(y)=\Delta_kh(y)-\int \psi(z)N(y,dz).
\end{equation}
Thus formula~(\ref{dyn}) is obtained by combing (\ref{dyn1}), (\ref{wat}) and
(\ref{wat1}) above. Hence, by (\ref{dyn}), $H_Dh(x)=h(x)$ and thereby $h$ is $X$-harmonic on $B$.
\item Assume that $h$ is $X$-harmonic on $B$. Let $\varphi\in C^\infty_c(B)$ and let $D\subset\overline{D}\subset B$ be a $W$-invariant bounded open set which contains the support of $\varphi$. Let $(h_n)_{n\geq 1}\subset~ C^2_c(B)$ be a sequence which converges uniformly to $h$ on $\partial D $. Since $H_D\varphi=0$ on $D$, applying (\ref{td}) we obtain
\begin{equation}\label{disn}
\langle H_Dh_n, \Delta_k\varphi\rangle_k = 0,\quad n\geq 1.
\end{equation}
On the other hand,
\begin{eqnarray*}
\sup_{x\in D}|H_Dh_n(x)-H_Dh(x)|&=&\sup_{x\in D}\left|\int_{\partial D} (h_n(y)-h(y))H_D(x,dy)\right|\\
&\leq&\sup_{y\in \partial D}|h_n(y)-h(y)|\longrightarrow 0\quad\textrm{as}\quad n\longrightarrow\infty.
\end{eqnarray*}
Hence,  by letting $n$ tend to $\infty$ in~(\ref{disn}), $\langle H_Dh, \Delta_k\varphi\rangle_k = 0$ and therefore $\langle h, \Delta_k\varphi\rangle_k = 0$ since $h=H_Dh$ on $D$.
\item Assume that $h$ is $\Delta_k$-harmonic on $B$ in the distributional sense. The hypoellipticity of the Dunkl Laplacian $\Delta_k$ on $W$-invariant open sets \cite{kk1,mt2} yields that $h\in C^\infty(B)$. Thus, using (\ref{ti}), it follows that, for every $\varphi\in C^\infty_c(B)$,
$$
\int_B\Delta_kh(x)\varphi(x)w_k(x)\,dx=0.
$$
Hence $\Delta_kh(x)=0$ for every $x\in B$ which means that $h$ is $\Delta_k$-harmonic on $B$.
\end{enumerate}
\end{proof}
It is worth noting that statements of the above theorem remain true if we replace the ball $B$ by any $W$-invariant open set, namely the whole space $\R^d$.
\begin{theorem}\label{theo1}
For every $f\in C^+(\partial B)$, the problem
\begin{equation}\label{dir}
\left\{\begin{array}{ll}
 \Delta_kh=0\;\textrm{ on}\; B\\
 h=f\;\textrm{on}\; \partial B,
\end{array}
\right.
\end{equation}
admits one and only one solution in $C^+(\overline{B})$ which is given by $H_Bf$.
\end{theorem}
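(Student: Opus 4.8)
The plan is to verify that the candidate $u:=H_Bf$ meets all the requirements—nonnegativity, continuity on $\overline B$, the boundary condition $u=f$ on $\partial B$, and $\Delta_ku=0$ in $B$—and then to prove uniqueness by reducing to a function vanishing on $\partial B$. First note that since $B$ is a ball centered at the origin it is invariant under every reflection $\sigma_\alpha$, hence under all of $W$; thus $\Gamma_B=\partial B$, and Theorem~\ref{support} guarantees that for each $x\in B$ the harmonic measure $H_B(x,\cdot)$ is a probability measure carried by $\partial B$. Consequently $u\ge 0$ because $f\ge 0$, and $u\in C(B)$ by~(\ref{vd}). Harmonicity comes for free: by~(\ref{vdd}) we have $H_VH_Bf=H_Bf$ on every $V$ with $\overline V\subset B$, so $u$ is $X$-harmonic on $B$, and Theorem~\ref{fond1} upgrades this to $\Delta_ku=0$ in $B$. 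Thus the only substantial point in the existence part is the boundary behaviour.

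To show $\lim_{x\to z}u(x)=f(z)$ for each $z\in\partial B$, I would use the affine barrier $\beta(x):=2(r^2-\langle x,z\rangle)$. Being affine, $\beta$ is $\Delta_k$-harmonic (a direct computation from~(\ref{lapd}) gives $\Delta_k\ell=0$ for every affine $\ell$, since its gradient and difference terms cancel); moreover $\beta(x)=2r(r-|x|)\ge 0$ on $\overline B$, $\beta(z)=0$, and, crucially, $\beta(y)=|y-z|^2$ for $y\in\partial B$ (because $|y|=|z|=r$). The key identity is $H_B\beta=\beta$ on $B$: choosing $\theta\in\mathcal{C}_c^\infty(\R^d)$ with $\theta\equiv 1$ near $\overline B$ and setting $F:=\beta\theta\in C_c^2(\R^d)$, the $W$-invariance of $B$ forces $F=\beta$ at both $x$ and every $\sigma_\alpha x$ for $x\in B$, so $\Delta_kF=0$ on $B$; then~(\ref{dynk}) gives $H_BF=F=\beta$, and since $H_B(x,\cdot)$ lives on $\partial B$ this reads $H_B\beta(x)=\beta(x)$. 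Now, given $\varepsilon>0$, pick $\delta>0$ with $|f(y)-f(z)|<\varepsilon$ on $\{y\in\partial B:|y-z|<\delta\}$; using $H_B(x,\partial B)=1$ and $|y-z|^2=\beta(y)\ge\delta^2$ on the far part of $\partial B$,
\[|u(x)-f(z)|\le\varepsilon+2\|f\|\,H_B(x,\{y:|y-z|\ge\delta\})\le\varepsilon+\frac{2\|f\|}{\delta^2}\,\beta(x).\]
Since $\beta$ is continuous and $\beta(z)=0$, letting $x\to z$ yields $\limsup_{x\to z}|u(x)-f(z)|\le\varepsilon$, whence boundary continuity. Together with the interior continuity this shows $u\in C^+(\overline B)$ with $u=f$ on $\partial B$, so $u$ solves~(\ref{dir}).

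For uniqueness, suppose $h_1,h_2\in C^+(\overline B)$ both solve~(\ref{dir}) and set $v:=h_1-h_2$. Then $v$ is $\Delta_k$-harmonic on $B$, continuous on $\overline B$, and $v=0$ on $\partial B$; by Theorem~\ref{fond1} it is $X$-harmonic. For $\rho<r$ the ball $B_\rho$ is again $W$-invariant, so by $X$-harmonicity and Theorem~\ref{support}, for $x\in B_\rho$ one has $v(x)=\int_{\partial B_\rho}v\,dH_{B_\rho}(x,\cdot)$, and therefore $|v(x)|\le\sup_{\partial B_\rho}|v|$. Because $v$ is continuous on the compact set $\overline B$ and vanishes on $\partial B$, $\sup_{\partial B_\rho}|v|\to 0$ as $\rho\uparrow r$; hence $v(x)=0$ for every $x\in B$ and $h_1=h_2=H_Bf$.

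The routine points here are the nonnegativity, the interior regularity, and the harmonicity, all immediate from the cited facts; the uniqueness is also clean once $X$-harmonicity is in hand. The real content—and the step I expect to be the main obstacle—is the regularity of the boundary points, i.e. proving that $u$ attains the data $f$ continuously on $\partial B$. The crux is the identity $H_B\beta=\beta$ for the \emph{non}-compactly-supported affine barrier: it cannot be read off directly from~(\ref{dynk}), which requires $C_c^2$ data, and its justification genuinely exploits the $W$-invariance of $B$, so that truncating $\beta$ far from $\overline B$ leaves the nonlocal part of $\Delta_k$ on $B$ undisturbed.
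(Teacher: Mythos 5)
Your proof is correct, but it departs from the paper's argument at the two places where the real work happens. For the boundary regularity, the paper takes the radial barrier $x\mapsto G^k(x,0)-G^k(z,0)$, verifies the barrier properties from the hyperharmonicity inequality (\ref{ghd}) and the explicit formula (\ref{gk0}), and then invokes the general balayage-space results \cite[Propositions VII.3.1 and VII.3.3]{hansen} to conclude that $z$ is regular and $H_B(z,\cdot)=\delta_z$. You instead use the affine barrier $\beta(x)=2(r^2-\langle x,z\rangle)$, which restricts to $y\mapsto|y-z|^2$ on $\partial B$, prove $H_B\beta=\beta$ by truncating $\beta$ outside a $W$-invariant neighbourhood of $\overline B$ and applying the Dynkin-type formula (\ref{dynk}), and finish with a Chebyshev estimate; this is self-contained and quantitative, and your observation that the $W$-invariance of $B$ is exactly what makes the truncation invisible to the nonlocal part of $\Delta_k$ is the right one. (Minor slip: the displayed equality $\beta(x)=2r(r-|x|)$ should be the inequality $\beta(x)\ge 2r(r-|x|)$, by Cauchy--Schwarz; only $\beta\ge 0$ is used, so nothing breaks.) For uniqueness, the paper exhausts $B$ by open sets $D_n$ and uses the quasi-left-continuity of the Dunkl process to pass from $H_{D_n}h=h$ to $H_Bh=h$, whereas you run a maximum principle over the concentric balls $B_\rho$ with $\rho\uparrow r$, which avoids any fine path property of the process. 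Both routes are valid; yours trades the citations to \cite{hansen} and to quasi-left-continuity for explicit computations that lean on $B$ being a ball centred at the origin, while the paper's barrier-plus-balayage argument is the one that would transfer to more general $W$-invariant domains admitting a barrier at each boundary point.
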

\begin{proof}
Let $f\in C^+(\partial B)$. By (\ref{vd}) and (\ref{vdd}), the function $H_Bf$ is continuous and $X$-harmonic on $B$. We shall show that $H_Bf$ is a continuous extension of $f$ on $\overline{B}$. Let $z\in \partial B$ and consider $V=\R^d\backslash\{0\}$ and $u$ the function defined on $V$ by
$$
u(x)=G^k(x,0)-G^k(z,0).
$$
Since $p_t^k(x,0)= \frac{c_k}{(4t)^{\frac{m}{2}}}
e^{-\frac{|x|^2}{4t}},\; x\in \R^d$, it follows  that
\begin{equation}\label{gk0}
G^k(x,0)=\frac{c_k}{4}\frac{\Gamma(m/2-1)}{|x|^{m-2}}.
\end{equation}
 Then, using (\ref{ghd}) and (\ref{gk0}), it is easy to verify that $u$ is a  barrier of $z$ (with respect to $B$), i.e.,
 \begin{itemize}
 \item [i)] $u$ is hyperharmonic on $V\cap B$,
 \item [ii)] $u$ is positive on $V\cap B$,
 \item [iii)]$\lim_{x\in V\cap B,x\rightarrow z}u(x)=0$.
 \end{itemize}
 Hence, by \cite[Propositions VII.3.1 and VII.3.3]{hansen}, $H_B(z,\cdot)=\delta_z$ and $\lim_{x\in B,x\rightarrow z}H_B f(x)=f(z)$. Since $z$ is arbitrary in $\partial B$, we conclude that $H_Bf$ is a continuous extension of $f$ on $\overline{B}$.
 So, it remains to prove the uniqueness. Let $h$ be an other continuous extension of $f$ on $\overline{B}$ solution to the problem (\ref{dir}). Let $x\in B$ and let $(D_n)_{n\geq 1}$ be a sequence of nonempty bounded open sets such that $x\in  D_n\subset\overline{D_n}\subset D_{n+1}$ and $B=\cup_nD_n$. Then  $(\tau_{D_n})_n$ converges to $\tau_B$ almost surely. Hence, the continuity of $h$ on $\overline{B}$ together with the quasi-left-continuity of the Dunkl process yield that $H_Bh(x)=\lim_nH_{D_n}h(x)$ and consequently $H_Bh(x)=h(x)$ since $H_{D_n}h(x)=h(x)$ for every $n\geq 1$. Thus $h(x)-H_Bf(x)=H_B(h-f)(x)=0$ since $h=f$ on $\partial B$. So, $h=H_Bf$ on $B$ and the uniqueness is proved.
\end{proof}
\section{Green operators}
The Green operator $G^k$ on the whole space $\R^d$ is defined, for every $f\in \cc{B}^+(\R^d)$, by
$$
G^kf(x):=\int_{\R^d}G^k(x,y)f(y)w_k(y)\,dy,\;x\in\R^d.
$$
By Fatou's lemma,  for each $y\in \R^d$, $G^k(\cdot,y)$ is lower semi-continuous on $\R^d$ and so $G^kf$ is lower semi-continuous on~$\R^d$.

In the sequel,   $B_r$ denotes the ball of $\R^d$ of center $0$ and radius $r>0$ and  $A_{t,s}$ denotes the annulus of $\R^d$ of center $0$ and radius $0\leq t<s<\infty$.
\begin{lemma}
\begin{enumerate}
\item
For every $0<r<\infty$,
\begin{equation}\label{g1b}
G^k1_{B_r}(x)=
\left\{\begin{array}{cr}
\frac{1}{m-2}\left(\frac{|x|^2}{m}+\frac{r^2-|x|^2}{2}\right)&\textrm{if}\;\; |x|\leq r\\
 \frac{1}{m(m-2)}r^m|x|^{2-m}&\textrm{if}\;\; |x|\geq r.
\end{array}
\right.
\end{equation}
\item
For every $0\leq t<s<\infty$,
\begin{equation}\label{g1a}
0\leq \sup_{x\in A_{t,s}}G^k1_{A_{t,s}}(x)\leq \frac{2}{m-2}s(s-t).
\end{equation}
\end{enumerate}
\end{lemma}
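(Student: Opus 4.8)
The plan is to derive both formulas from the radial reduction already carried out in the proof of Lemma~\ref{taus}. Since $G^k 1_{B_r}(x)=\int_0^\infty\int_{B_r}p_t^k(x,y)w_k(y)\,dy\,dt$, the chain of equalities leading to~(\ref{locb1}) shows, for every $x\in\R^d$, that
\[
G^k 1_{B_r}(x)=\frac{1}{m-2}\int_0^r u^{m-1}\big(\max(u,|x|)\big)^{2-m}\,du .
\]
The argument there uses only the integral representation of $p_t^k$, the homogeneity of $w_k$, Fubini, and the Weber--Schafheitlin type formula from \cite[p.100]{magnus}, none of which needs $x\in B_r$. For part~(1) I would then evaluate this integral in the two regimes. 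When $|x|\le r$ I split at $u=|x|$: on $(0,|x|)$ one has $\max(u,|x|)=|x|$, so the integrand is $u^{m-1}|x|^{2-m}$, while on $(|x|,r)$ it equals $u$; integrating gives $\frac{|x|^2}{m}+\frac{r^2-|x|^2}{2}$. When $|x|\ge r$ one has $\max(u,|x|)=|x|$ throughout $(0,r)$, so the integral is $\frac{r^m}{m}|x|^{2-m}$. Dividing by $m-2$ yields exactly~(\ref{g1b}).

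For part~(2), the identical computation with $B_r$ replaced by the annulus $A_{t,s}$, so that the radial variable now ranges over $(t,s)$ instead of $(0,r)$, gives for every $x\in\R^d$
\[
G^k 1_{A_{t,s}}(x)=\frac{1}{m-2}\int_t^s u^{m-1}\big(\max(u,|x|)\big)^{2-m}\,du .
\]
The nonnegativity in~(\ref{g1a}) is immediate since $G^k\ge 0$. For the upper bound the key observation is that one should estimate under the integral sign rather than bound $1_{A_{t,s}}$ by $1_{B_s}$: the latter includes the contribution of the inner ball $B_t$, which is not negligible when $|x|$ is near $t$ and makes the resulting estimate fail as $t\to s$. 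Instead, since $m>2$ we have $2-m<0$, and from $\max(u,|x|)\ge u$ it follows that $(\max(u,|x|))^{2-m}\le u^{2-m}$, so the integrand is bounded by $u^{m-1}u^{2-m}=u$, uniformly in $x$. Hence
\[
G^k 1_{A_{t,s}}(x)\le\frac{1}{m-2}\int_t^s u\,du=\frac{s^2-t^2}{2(m-2)}=\frac{(s-t)(s+t)}{2(m-2)}\le\frac{s(s-t)}{m-2},
\]
using $s+t\le 2s$. Since this holds for every $x\in\R^d$, taking the supremum over $x\in A_{t,s}$ gives a bound even sharper than the claimed $\frac{2}{m-2}s(s-t)$.

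I expect the only genuine subtlety to be twofold: first, justifying that the radial reduction of Lemma~\ref{taus} transfers verbatim to the annulus (the Weber--Schafheitlin evaluation produces the same kernel $\frac{1}{m-2}u^{m-1}(\max(u,|x|))^{2-m}$, and only the range of the radial variable changes); and second, recognizing that the crude domination $G^k 1_{A_{t,s}}\le G^k 1_{B_s}$ is too weak, which forces the pointwise-in-$u$ estimate $(\max(u,|x|))^{2-m}\le u^{2-m}$ that both removes the $x$-dependence and collapses the weight $u^{m-1}$ to the linear factor $u$. Everything else is a routine evaluation of elementary integrals.
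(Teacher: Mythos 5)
Your proof is correct. Part (1) is essentially the paper's argument: the authors also obtain (\ref{g1b}) directly from the radial reduction (\ref{locb1}) established in the proof of Lemma \ref{taus}, i.e. from $G^k1_{B_r}(x)=\frac{1}{m-2}\int_0^r u^{m-1}\bigl(\max(u,|x|)\bigr)^{2-m}\,du$; you merely make explicit the evaluation in the regime $|x|\ge r$, which the paper leaves implicit. For part (2) you genuinely diverge. The paper writes $G^k1_{A_{t,s}}=G^k1_{B_s}-G^k1_{B_t}$, substitutes the two closed-form expressions from (\ref{g1b}) for $x\in A_{t,s}$, and then massages the resulting expression algebraically (using $1-q^m\le m(1-q)$ and $s^2-t^2\le 2s(s-t)$) to reach the constant $\frac{2}{m-2}$. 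You instead estimate the radial kernel pointwise before integrating: since $m>2$ and $\max(u,|x|)\ge u$, the integrand $u^{m-1}\bigl(\max(u,|x|)\bigr)^{2-m}$ is at most $u$, which kills the $x$-dependence at once and yields the bound $\frac{s^2-t^2}{2(m-2)}\le\frac{1}{m-2}s(s-t)$, valid for all $x\in\R^d$ and sharper than (\ref{g1a}) by a factor of $2$. Your route is shorter and avoids the case analysis hidden in applying (\ref{g1b}) to both balls; the paper's route has the mild advantage of relying only on the already-stated formula (\ref{g1b}) rather than re-invoking the radial representation of $G^k1_{A_{t,s}}$ (though, as you note, that representation follows at once from $G^k1_{A_{t,s}}=G^k1_{B_s}-G^k1_{B_t}$, so nothing new is needed). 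One small remark: your warning that the domination $G^k1_{A_{t,s}}\le G^k1_{B_s}$ would be too weak is apt, but it is not what the paper does --- the paper uses the exact difference, not a crude upper bound.
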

\begin{proof}Formula (\ref{g1b}) follows immediately from (\ref{locb1}) seeing that
$$
G^k1_{B_r}(x)=\int_0^\infty \int_{B_r} p_t^k(x,y)w_k(y)\,dy\, dt.
$$
Let $0\leq t<s<\infty$. It is clear that $0\leq G^k1_{A_{t,s}}$ and that
$$
G^k1_{A_{t,s}}= G^k1_{B_{s}}-G^k1_{B_{t}}.
$$
Then, using (\ref{g1b}), it follows that for every $x\in A_{t,s}$,
\begin{eqnarray*}
G^k1_{A_{t,s}}(x) & = & \frac{1}{m-2}\left[\frac{|x|^2}{m}+\frac{s^2-|x|^2}{2}\right]- \frac{1}{m(m-2)}t^m|x|^{2-m}\\
&=& \frac{1}{m-2}\left[\frac{|x|^2}{m}\left(1-\left(\frac{t}{|x|}\right)^m\right)+ \frac{s^2-|x|^2}{2}\right]\\
&\leq& \frac{1}{m-2}\left[\frac{s^2}{m}\left(1-\left(\frac{t}{s}\right)^m\right)+ \frac{s^2-t^2}{2}\right]\\
&\leq& \frac{1}{m-2}\left[s^2\left(1-\frac ts\right)+ \frac{s^2-t^2}{2}\right]\\
&\leq& \frac{2}{m-2}s(s-t).
\end{eqnarray*}
\end{proof}

An immediate consequence of the above lemma is that for each $x\in\R^d$ the function $G^k(\cdot,x) w_k$ is locally Lebesgue-integrable  on $\R^d$. Thus, by Fubini's theorem, for every  $f\in\cc{B}_b(\R^d)$ with compact   support, we have
\begin{eqnarray*}
G^k f(x)= \int_{\R^d} G^k(x,y) f(y)w_k(y)dy&=&\int_0^\infty \int_{\R^d} p_t^k(x,y)f(y) w_k(y)dydt\\
&=& \int_0^\infty E^x\left[f(X_t)\right]dt=E^x\left[\int_0^\infty  f(X_t) dt\right].
\end{eqnarray*}
\begin{proposition}
Let $f\in \cc{B}_b(\R^d)$ with compact support. Then $G^kf\in C_0(\R^d)$ and
\begin{equation}\label{p1}
\Delta_k G^kf=-f \;\textrm{ in }\;\R^d
\end{equation}
in the distributional sense, i.e., for every $\psi\in \mathcal{C}_c^\infty(\R^d)$,
$$
 \int_{\R^d} G^kf(x)\Delta_k\psi(x)w_k(x)\, dx=  -\int_{\R^d} f(x)\psi(x)w_k(x)\,dx.
$$
Moreover, if $f$ is radially symmetric then $G^kf$ is also.
\end{proposition}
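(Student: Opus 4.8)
The plan is to handle the three assertions separately, drawing on the explicit formulas (\ref{g1b}) for $G^k1_{B_r}$, the identity (\ref{inv}), the symmetry of $G^k$, and the lower semicontinuity of Green potentials of nonnegative functions noted just before the statement.

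\smallskip
For $G^kf\in C_0(\R^d)$, fix $r>0$ with $\mathrm{supp}\,f\subset B_r$ and split $f=f^+-f^-$. Decay at infinity is immediate, since for a nonnegative bounded $g$ supported in $B_r$ one has $0\le G^kg\le\|g\|\,G^k1_{B_r}$, so (\ref{g1b}) gives $|G^kf(x)|\le\|f\|\,\frac{r^m}{m(m-2)}|x|^{2-m}\to0$ as $|x|\to\infty$ (this is where $m>2$ is used). For continuity I would use a semicontinuity sandwich. Each $G^kf^{\pm}$ is lower semicontinuous; moreover $\|f^{\pm}\|1_{B_r}-f^{\pm}\ge0$, so $\|f^{\pm}\|\,G^k1_{B_r}-G^kf^{\pm}=G^k(\|f^{\pm}\|1_{B_r}-f^{\pm})$ is also lower semicontinuous, while $\|f^{\pm}\|\,G^k1_{B_r}$ is continuous by the explicit formula (\ref{g1b}). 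Hence $G^kf^{\pm}$ is upper semicontinuous as well, thus continuous, and so is $G^kf$.

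\smallskip
For the distributional identity (\ref{p1}), let $\psi\in\mathcal{C}_c^\infty(\R^d)$ and write
$$
\int_{\R^d}G^kf(x)\,\Delta_k\psi(x)\,w_k(x)\,dx=\int_{\R^d}\!\!\int_{\R^d}G^k(x,y)f(y)w_k(y)\,\Delta_k\psi(x)\,w_k(x)\,dy\,dx.
$$
The plan is to interchange the order of integration and then apply (\ref{inv}) to $\psi$. Since $f$ and $\Delta_k\psi$ are bounded with supports in, say, $B_r$ and $B_R$, the double integral is bounded by $\|f\|\,\|\Delta_k\psi\|\int_{B_r}\bigl(\int_{B_R}G^k(x,y)w_k(x)\,dx\bigr)w_k(y)\,dy$, and the inner integral equals $G^k1_{B_R}(y)$ by the symmetry $G^k(x,y)=G^k(y,x)$, which is finite; hence Fubini applies. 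After the swap, (\ref{inv}) (applied to $\psi$) together with that symmetry gives $\int G^k(x,y)\Delta_k\psi(x)w_k(x)\,dx=-\psi(y)$ for each $y$, so the right-hand side becomes $-\int f(y)\psi(y)w_k(y)\,dy$, as desired.

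\smallskip
For the last assertion, suppose $f$ is radial, $f(y)=g(|y|)$. The point is that $\Delta_k$, and hence $G^k$, is only $W$-invariant and not rotation invariant, so radiality of $G^kf$ cannot be read off the kernel; instead I would reduce it to the radiality of the explicit potentials $G^k1_{B_r}$. Passing to polar coordinates $y=u\xi$ and using that $w_k$ is homogeneous of degree $m-d$,
$$
G^kf(x)=\int_0^\infty g(u)\,u^{m-1}\Psi(x,u)\,du,\qquad \Psi(x,u):=\int_{S^{d-1}}G^k(x,u\xi)\,w_k(\xi)\,\sigma(d\xi).
$$
For every $0\le t<s$ one has $\int_t^s\Psi(x,u)u^{m-1}\,du=G^k1_{A_{t,s}}(x)=G^k1_{B_s}(x)-G^k1_{B_t}(x)$, which by (\ref{g1b}) depends only on $|x|$. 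Fixing $x_1,x_2$ with $|x_1|=|x_2|$, the integral of $u\mapsto\bigl(\Psi(x_1,u)-\Psi(x_2,u)\bigr)u^{m-1}$ over every interval vanishes, hence $\Psi(x_1,u)=\Psi(x_2,u)$ for almost every $u$, and inserting this in the displayed formula yields $G^kf(x_1)=G^kf(x_2)$. Thus $G^kf$ depends only on $|x|$.

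\smallskip
The routine obstacles are the Fubini justification in the second part and the measure-theoretic step ``vanishing integral over every interval forces equality almost everywhere'' in the third. The only genuinely non-formal idea is the reduction in the third part, which exploits that radial symmetry of the potentials $G^k1_{B_r}$ persists even though the underlying kernel lacks rotation invariance.
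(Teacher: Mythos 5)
Your proof is correct. The first two parts (the $C_0$ membership via the lower/upper semicontinuity sandwich $G^kf^{\pm}+G^k(\|f^{\pm}\|1_{B_r}-f^{\pm})=\|f^{\pm}\|G^k1_{B_r}$, and the distributional identity via Fubini plus (\ref{inv}) and the symmetry of $G^k$) coincide with the paper's argument, with your Fubini justification slightly more explicit than the paper's one-line appeal to (\ref{g1b}). The third part is where you genuinely diverge: the paper approximates a radial $f$ by an increasing sequence of simple functions of the form $\sum_i\alpha_i 1_{B_{r_i}}$, notes each $G^kf_n$ is radial by (\ref{g1b}), and passes to the limit by dominated convergence; you instead disintegrate $G^kf$ in polar coordinates, observe that $\int_t^s\Psi(\cdot,u)u^{m-1}\,du=G^k1_{A_{t,s}}$ is radial for every annulus, and conclude $\Psi(x_1,\cdot)=\Psi(x_2,\cdot)$ a.e.\ by a Lebesgue-point argument. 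Both routes rest on the same key input --- radiality of the explicit potentials in (\ref{g1b}) --- but yours avoids the approximation step entirely (and in particular sidesteps the mild fussiness in the paper's claim that a general bounded radial $f$ is an increasing pointwise limit of combinations of ball indicators, which really requires splitting into $f^+-f^-$ or using annuli), at the cost of the measure-theoretic step that a locally integrable function with vanishing integral over every interval is zero almost everywhere; that step is standard and your appeal to it is legitimate since $u\mapsto\Psi(x,u)u^{m-1}$ is nonnegative with finite integral over every $[0,s]$.
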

\begin{proof}

Let $r>0$ such that the support of $f$ is contained in $B_r$. Let us first assume that $f\geq0$ and let $g=\|f\|\,1_{B_r}-f$.
Then applying the Green operator $G^k$, we obtain
\begin{equation}\label{gkfg}
G^kf+G^kg=\|f\|\,G^k1_{B_r}.
\end{equation}
Since  $G^kf$ and $G^kg$ are lower semi-continuous on $\R^d$ and $G^k1_{B_r}\in C_0(\R^d)$ (see~(\ref{g1b})), we immediately deduce from (\ref{gkfg}) that $G^kf\in C_0(\R^d)$. For $f$  of arbitrary sign, we write $f=f^+-f^-$  where $f^+=\max(f,0)$ and $f^-=\max(-f,0)$. Then the same reasoning shows that $G^kf^+$ and $G^kf^-$ are in $C_0(\R^d)$. Hence $G^kf=G^kf^+-G^kf^-$ is in $C_0(\R^d)$ as desired.
Let $\psi\in \mathcal{C}_c^\infty(\R^d)$. Then, by (\ref{inv}), for every $y\in \R^d$ we have
$$
\int_{\R^d} G^k(x,y)\Delta_k\psi(x)w_k(x) = -\psi(y).
$$
Hence,
\begin{eqnarray*}
 \int_{\R^d} G^kf(x)\Delta_k\psi(x)w_k(x)\, dx
&=& \int_{\R^d} \left(\int_{\R^d} G^k(x,y)f(y)w_k(y)\,dy\right)\Delta_k\psi(x)w_k(x)\, dx\\
&=& \int_{\R^d} \left(\int_{\R^d} G^k(x,y)\Delta_k\psi(x)w_k(x)\, dx\right)f(y)w_k(y)\,dy\\
&=& -\int_{\R^d} f(y)\psi(y)w_k(y)\,dy.
\end{eqnarray*}
Formula (\ref{g1b}) justify the transformation of the above integrals  by Fubini's theorem. Now, assume that $f$ is radially symmetric. Let $(f_n)_n$ be an increasing sequence of functions of the form
$$
f_n=\sum_{i=1}^n\alpha_i1_{B_{r_i}},
$$
which converges pointwise to $f$ on $\R^d$. Clearly, by formula (\ref{g1b}), $G^kf_n$ is radially symmetric. On the other hand, using the dominated convergence theorem, for every $x\in \R^d$, $\lim_{n\to\infty}G^kf_n(x)=G^kf(x)$. Thus $G^kf$ is  radially symmetric.
\end{proof}

For every open set $D$, we define the Green operator $G^k_D$ on $\cc{B}_b(D)$ by
$$
G^k_Df(x):=E^x\left[\int_0^{\tau_D}f(X_s)\,ds\right],\quad x\in D.
$$
For every $f\in\cc{B}_b(D)$, we denote by $\widetilde{f}$  the extension of $f$ on $\R^d$ such that $\widetilde{f}=0$ on $\R^d\setminus D$. Since Dunkl process satisfies the strong Markov property, for every $x\in D$ we have
\begin{eqnarray*}
G^k\widetilde{f}(x) &=& E^x\left[\int_0^\infty\widetilde{f}(X_s)\,ds\right]\\
 &=& E^x\left[\int_0^{\tau_D}\widetilde{f}(X_s)\,ds\right]+ E^x\left[\int_{\tau_D}^\infty\widetilde{f}(X_s)\,ds\right]\\
  &=& E^x\left[\int_0^{\tau_D}f(X_s)\,ds\right] + E^x\left[E^{X_{\tau_D}}\left[\int_0^\infty\widetilde{f}(X_s) ds\right]\right]\\
&=& E^x\left[\int_0^{\tau_D}f(X_s)\,ds\right] + H_DG^k\widetilde{f}(x).
\end{eqnarray*}
Therefore,
\begin{equation}\label{gkb}
G^k_Df=G^k\widetilde{f}-H_DG^k\widetilde{f}\; \textrm{ on }\; D.
\end{equation}
Let $B$ be an open ball of $\R^d$ of center $0$ and radius $r>0$. Then it follows from (\ref{gkb}) that, for every $f\in \cc{B}_b(B)$, $G^k_Bf$ can be represented by
$$
G^k_Bf(x)=\int_BG^k_B(x,y)f(y) w_k(y)\, dy,
$$
where, for every $x,y\in B$,
\begin{equation}\label{gkbxy}
G^k_B(x,y):=G^k(x,y)-\int_{\partial B} G^k(y,z)H_B(x,dz).
\end{equation}
Since, by (\ref{ptk}), for every  $y,z\in \R^d$, we have
\begin{equation}\label{gin}
G^k(y,z)\leq \frac{c_k\Gamma(m/2-1)}{4\left(|y|-|z|\right)^{m-2}},
\end{equation}
it is immediate to see that, for every $x,y\in B$,
$$
\int_{\partial B} G^k(y,z)H_B(x,dz)\leq \frac{c_k\Gamma(m/2-1)}{4\left(|y|-r\right)^{m-2}}<\infty.
$$
Therefore, for every $x,y\in B$, $G^k_B(x,y)$ introduced in (\ref{gkbxy}) is well defined. In the following corollary, we collect some properties of the Green operator $G^k_B$.
\begin{corollary}
Let  $f\in \cc{B}_b(B)$.
Then $G^k_Bf\in C_0(B)$ and
\begin{equation}\label{invb}
\Delta_kG^k_Bf=-f \;\textrm{ in }\; B
\end{equation}
in the distributional sense.
Moreover, if $f$ is radially symmetric then $G^k_Bf$ is also.
\end{corollary}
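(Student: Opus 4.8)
The plan is to reduce all three assertions to the representation (\ref{gkb}), namely $G^k_Bf=G^k\widetilde f-H_BG^k\widetilde f$ on $B$, where $\widetilde f$ is the extension of $f$ by zero; since $B$ is bounded, $\widetilde f$ is a bounded Borel function with support in the compact set $\overline B$, so the preceding Proposition applies to it. Each claim then follows by treating the two pieces $G^k\widetilde f$ and $H_BG^k\widetilde f$ separately, using what is already known for $G^k$ and for the harmonic kernel $H_B$.

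First I would settle continuity and the boundary behaviour. By the preceding Proposition, $G^k\widetilde f\in C_0(\R^d)$; in particular its restriction to $\partial B=\Gamma_B$ is bounded and continuous, so (\ref{vd}) gives $H_BG^k\widetilde f\in C(B)$, whence $G^k_Bf\in C(B)$. To see that $G^k_Bf$ vanishes on $\partial B$, fix $z\in\partial B$. The barrier argument in the proof of Theorem \ref{theo1} shows that $\lim_{x\to z}H_Bg(x)=g(z)$ for continuous boundary data $g$ (first for $g\in C^+(\partial B)$, then for signed continuous $g$ by linearity, writing $G^k\widetilde f=G^k\widetilde f^{+}-G^k\widetilde f^{-}$). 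Applying this with $g=G^k\widetilde f|_{\partial B}$, and using the continuity of $G^k\widetilde f$ itself, both terms in (\ref{gkb}) tend to $G^k\widetilde f(z)$, so their difference tends to $0$; thus $G^k_Bf\in C_0(B)$.

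For the distributional identity I would test against $\Delta_k\psi$, $\psi\in C^\infty_c(B)$, splitting according to (\ref{gkb}). The distributional form of (\ref{p1}) gives $\langle G^k\widetilde f,\Delta_k\psi\rangle_k=-\int_B f\psi\,w_k\,dx$, since $\widetilde f=f$ on $B$ and $\Delta_k\psi$ is supported in $\overline B$ (here the $W$-invariance of the ball is used: reflections preserve $|x|$, so $\psi(\sigma_\alpha x)=0$ for $x\notin\overline B$). For the second piece, the key observation is that $H_BG^k\widetilde f$ is $X$-harmonic on $B$: for any bounded open $D$ with $\overline D\subset B$, relation (\ref{vdd}), applied with $B$ in the role of the outer set and $g=G^k\widetilde f|_{\partial B}$, gives $H_D(H_Bg)=H_Bg$ on $D$. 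Being continuous and $X$-harmonic, $H_BG^k\widetilde f$ is $\Delta_k$-harmonic in the distributional sense by Theorem \ref{fond1}, so $\langle H_BG^k\widetilde f,\Delta_k\psi\rangle_k=0$. Subtracting yields $\langle G^k_Bf,\Delta_k\psi\rangle_k=-\int_B f\psi\,w_k\,dx$, which is exactly (\ref{invb}).

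Finally, radial symmetry is the easiest point. If $f$ is radial then so is $\widetilde f$, hence $G^k\widetilde f$ is radial by the Proposition. On the sphere $\partial B=\{|x|=r\}$ a radial function is constant, say $G^k\widetilde f\equiv c$ there, so $H_BG^k\widetilde f(x)=c\,H_B(x,\partial B)=c$ for every $x\in B$ because $H_B(x,\partial B)=1$. Thus $G^k_Bf=G^k\widetilde f-c$ is radial. The step needing the most care is the boundary-vanishing in the $C_0$ claim, where one must combine the continuity of $G^k\widetilde f$ with the boundary limit of $H_B$ borrowed from Theorem \ref{theo1}; the $X$-harmonicity input for the distributional identity, though conceptually the crux, drops out immediately from (\ref{vdd}) together with Theorem \ref{fond1}.
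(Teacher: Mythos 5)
Your proposal is correct and follows essentially the same route as the paper: decompose via (\ref{gkb}), get continuity and boundary vanishing from the Proposition together with the boundary limit of $H_B$ from Theorem \ref{theo1}, deduce (\ref{invb}) from (\ref{p1}) plus the harmonicity of $H_BG^k\widetilde f$, and obtain radial symmetry from the constancy of $G^k\widetilde f$ on $\partial B$. You merely fill in details the paper leaves implicit (notably the $W$-invariance remark ensuring $\Delta_k\psi$ vanishes off $\overline B$, and the passage through $X$-harmonicity and Theorem \ref{fond1}), which is welcome but not a different argument.
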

\begin{proof}
Clearly, $G^k_Bf$ is continuous on $B$ since $G^k\widetilde{f}$ and $H_BG^k\widetilde{f}$ are. For every $z\in \partial B$,
$$
\lim_{x\to z}G^k_Bf(x)=0,
$$
since  $\lim_{x\to z}H_BG^k\widetilde{f}(x)= G^k\widetilde{f}(z)$. Thus $G^k_Bf\in C_0(B)$. Formula (\ref{invb}) follows immediately from (\ref{p1}) and (\ref{gkb}). If $f$ is radially symmetric on $B$ then $G^k\widetilde{f}$ is also. Therefore, by (\ref{gkb}), $G^k_Bf(x)=G^k\widetilde{f}(x)-G^k\widetilde{f}(z)$ for some $z\in \partial B$. Hence, $G^k_Bf$ is radially symmetric on $B$ as desired.
\end{proof}
\begin{proposition}\label{compact}
For every $M>0$, the family $\{ G^k_Bf,\;\|f\|\leq M \}$ is relatively compact in $C_0(B)$ endowed with
 the uniform  norm.
\end{proposition}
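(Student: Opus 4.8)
The plan is to verify the hypotheses of the Arzel\`a--Ascoli theorem in the locally compact setting of $C_0(B)$: a family is relatively compact there as soon as it is uniformly bounded, equicontinuous on compact subsets of $B$, and equi-vanishing at $\partial B$ (i.e. for every $\varepsilon>0$ there is a compact $K\subset B$ with $|g|<\varepsilon$ off $K$ for every member $g$). Two of these three are immediate from a single domination. Applying the preceding corollary to the constant function $1$ gives $G^k_B1\in C_0(B)$, and since $|f|\le\|f\|$ and $G^k_B$ is positivity preserving we obtain the pointwise bound $|G^k_Bf(x)|\le G^k_B|f|(x)\le\|f\|\,G^k_B1(x)\le M\,G^k_B1(x)$ for all $x\in B$ whenever $\|f\|\le M$. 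Uniform boundedness follows from $\|G^k_B1\|<\infty$, and equi-vanishing at the boundary follows because $G^k_B1\in C_0(B)$: given $\varepsilon$, pick a compact $K$ with $G^k_B1<\varepsilon/M$ off $K$. It remains to prove local equicontinuity, which is the heart of the matter.

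For equicontinuity I would start from the kernel representation $G^k_Bf(x)=\int_BG^k_B(x,y)f(y)w_k(y)\,dy$ and estimate $|G^k_Bf(x)-G^k_Bf(x_0)|\le M\int_B|G^k_B(x,y)-G^k_B(x_0,y)|\,w_k(y)\,dy$. Fix $x_0\in B$ and split $B$ into the orbit neighbourhood $\mathcal O_\delta:=\bigcup_{w\in W}B(wx_0,\delta)$ and its complement. The decisive observation, and the reason the annulus estimate~(\ref{g1a}) was isolated above, is that every point of the orbit $Wx_0$ lies on the sphere of radius $|x_0|$, so $\mathcal O_\delta\subset A_{(|x_0|-\delta)_+,\,|x_0|+\delta}$. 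Hence $G^k_B\le G^k$ together with~(\ref{g1a}) gives $\int_{\mathcal O_\delta}G^k_B(x,y)w_k(y)\,dy\le G^k1_{A_{(|x_0|-\delta)_+,|x_0|+\delta}}(x)\le \frac{4\delta(r+\delta)}{m-2}$, uniformly in $x\in\R^d$ and in $f$. Applying this bound at both $x$ and $x_0$ shows the contribution of $\mathcal O_\delta$ to the difference above is at most $2M\cdot\frac{4\delta(r+\delta)}{m-2}$, which is made $<\varepsilon/2$ by fixing $\delta$ small, once and for all members of the family.

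On the complement $B\setminus\mathcal O_\delta$ the pair $(x,y)$ stays uniformly off the orbit diagonal $\{x\in Wy\}$ whenever $|x-x_0|<\delta/2$, since then $\mathrm{dist}(x,Wy)\ge\delta/2$ (reflections being isometries). There $G^k_B(\cdot,y)$ is continuous in $x$: the term $\int_{\partial B}G^k(y,z)H_B(x,dz)=H_B\bigl(G^k(y,\cdot)|_{\partial B}\bigr)(x)$ is $X$-harmonic, hence continuous on $B$ by~(\ref{vd}) (note $G^k(y,\cdot)|_{\partial B}$ is bounded by~(\ref{gin}) since $|y|<r$), while for fixed $y$ the function $G^k(\cdot,y)$ is distributionally $\Delta_k$-harmonic on the $W$-invariant open set $\R^d\setminus Wy$ by~(\ref{inv}) and therefore smooth there by the hypoellipticity of $\Delta_k$. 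A uniform-continuity argument on the compact region $\{|x-x_0|\le\delta/2\}\times(\overline B\setminus\mathcal O_\delta)$ then bounds the contribution of $B\setminus\mathcal O_\delta$ by $M\,\omega(|x-x_0|)\int_Bw_k$, where $\omega(s)\to0$ as $s\to0$; taking $|x-x_0|$ small makes this $<\varepsilon/2$. Combining the two parts yields local equicontinuity, and Arzel\`a--Ascoli finishes the proof.

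The step I expect to be the main obstacle is exactly the far-part argument: what is really needed is the joint continuity of $G^k$ off the orbit diagonal (equivalently, a uniform integrable domination that legitimises passing to the limit), and the heat-kernel bound~(\ref{ptk}) is too crude for this, because its Gaussian factor $e^{-(|x|-|y|)^2/4t}$ degenerates on pairs $x,y$ of equal modulus lying in distinct orbits. The way around it is to rely not on~(\ref{ptk}) but on the potential-theoretic regularity just described, namely $G^k(\cdot,y)\in C^\infty(\R^d\setminus Wy)$, upgraded to joint continuity off the orbit diagonal by elliptic regularity in both variables. What makes the whole scheme close up cleanly is precisely that the orbit $Wx_0$ sits on a single sphere, so that one annulus, rather than a union of balls, absorbs the entire singular region and is controlled by~(\ref{g1a}).
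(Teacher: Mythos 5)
Your overall strategy (Arzel\`a--Ascoli, the uniform bound $|G^k_Bf|\le M\,G^k_B1$, and the annulus estimate (\ref{g1a}) to absorb the singular set, which lies on the sphere $|y|=|x_0|$) coincides with the paper's, and your treatment of uniform boundedness and of the vanishing at $\partial B$ (via $G^k_B1\in C_0(B)$) is correct. The genuine gap is in your far-part argument. Because you excise only the finite union of balls $\mathcal{O}_\delta=\bigcup_{w\in W}B(wx_0,\delta)$, the remaining region $B\setminus\mathcal{O}_\delta$ still contains points $y$ with $|y|=|x_0|$ but $y\notin Wx_0$; for such pairs neither (\ref{gin}) nor (\ref{ptk}) gives any control (as you yourself observe), and your argument then rests entirely on the claim that $G^k$ is \emph{jointly} continuous --- in particular finite, with a uniform modulus on the compact product set --- off the orbit diagonal. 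You justify this by ``elliptic regularity in both variables'', but what actually follows from (\ref{inv}) and the cited hypoellipticity is smoothness of $G^k(\cdot,y)$ on $\R^d\setminus Wy$ for each \emph{fixed} $y$; separate regularity in each variable does not yield joint continuity, and no two-variable regularity statement for the nonlocal operator $\Delta_k$ is among the paper's tools. So the step you flag as the main obstacle is indeed unproved, and it is the load-bearing step.

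The paper sidesteps this entirely by excising the \emph{whole} annulus $A_{t,s}$ with $t=\max(0,|x|-\epsilon)$ and $s=|x|+\epsilon$, rather than a neighbourhood of the orbit: outside that annulus one has $\bigl||x|-|y|\bigr|\ge\epsilon$, so the elementary bound (\ref{gin}) gives $G^k(x,y)\le c_k\Gamma(m/2-1)/(4\epsilon^{m-2})$, and only \emph{boundedness}, not continuity, is needed there. Combined with (\ref{g1a}) this yields uniform integrability of the family $\{G^k_B(x,\cdot):x\in B\}$ with respect to $w_k(y)\,dy$, and equicontinuity then follows from Vitali's convergence theorem, for which the pointwise convergence $G^k_B(x,y)\to G^k_B(x_0,y)$ is required only for \emph{almost every} $y$. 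Since the exceptional sphere $\{|y|=|x_0|\}$ is Lebesgue-null, the crude Gaussian bound (\ref{ptk}) (splitting $\int_0^\delta p_t^k\,dt+\int_\delta^\infty p_t^k\,dt$) already supplies this almost-everywhere continuity, together with (\ref{vd}) for the harmonic-measure term. If you replace your orbit-ball decomposition by the paper's annulus and your uniform-continuity-on-a-compact-product argument by Vitali's theorem, your proof closes without any new regularity input.
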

\begin{proof}
In virtue of Arzel\`a-Ascoli theorem, we need to show that $\{ G^k_Bf,\;\|f\|\leq M \}$ is uniformly bounded and equicontinuous on $B$.
Denote by $r$  the radius of the ball $B$. Let $f\in \cc{B}_b(B)$ such that $\|f\|\leq M$. Obviously, $\|G^k_Bf\|\leq M \|G^k_B1\|\leq M\|G^k1_B\|$. Thus, using (\ref{g1b}), we obtain
$$
\|G^k_Bf\|\leq\frac{r^2M}{2(m-2)}.
$$
This means that the family $\{ G^k_Bf,\;\|f\|\leq M \}$ is uniformly bounded. Next, we claim that the family
$\{G_B^k(x,\cdot),\; x\in B\}$ is uniformly integrable with respect to the measure $w_k(y)\,dy$. Let $x\in B$ and $\epsilon>0$ small enough. Let $A_{t,s}$ be the annulus of $\R^d$ of center $0$ and radius $t=\max(0,|x|-\epsilon)$ and $s=|x|+\epsilon$. Then, for every Borel subset $D$ of $B$, we have
\begin{eqnarray*}
\int_D G^k_B(x,y)w_k(y)dy&\leq & \int_D G^k(x,y)w_k(y)dy\\
 &=& \int_{D\cap A_{t,s}} G^k(x,y)w_k(y)\,dy+ \int_{D\setminus A_{t,s}} G^k(x,y)w_k(y)dy\\
 &\leq& G^k1_{A_{t,s}}(x)+\left(\sup_{y\in D\setminus A_{t,s}}G^k(x,y)\right)\int_{D}w_k(y)dy.
\end{eqnarray*}
Hence, it follows from (\ref{gin}) and (\ref{g1a}) that
$$
\int_D G^k_B(x,y)w_k(y)\,dy\leq \frac{4r}{m-2}\epsilon+ \frac{c_k\Gamma(m/2-1)}{4\epsilon^{m-2}}\,\int_D w_k(y)\,dy.
$$
Put $\eta=\epsilon^{m-1}$. Then for every Borel subset $D$ of $B$ such that $\int_D w_k(y)\,dy<\eta$, we have
$$
\int_D G^k_B(x,y)w_k(y)\,dy\leq \left(\frac{4r}{m-2}+ \frac{c_k\Gamma(m/2-1)}{4}\right)\epsilon.
$$
Thus, the uniform integrability of the family $\{G_B^k(x,\cdot),\; x\in B\}$ is shown. Therefore, in virtue of Vitali's convergence theorem, for $z\in B$,
$$
\lim_{x\to z}\int_B\left|G^k_B(x,y)-G^k_B(z,y)\right|w_k(y)\,dy = 0.
$$
Hence, the family $\{G_B^kf,\; \|f\|\leq M\}$ is equicontinuous on $B$ since
$$
\lim_{x\to z}\sup_{\|f\|\leq M}\left|G^k_Bf(x)-G^k_Bf(z)\right|\leq M\lim_{x\to z}\int_B\left|G^k_B(x,y)-G^k_B(z,y)\right|w_k(y)\,dy = 0.
$$
\end{proof}
\section{Semilinear Dirichlet problem}
Let $B$ be an open ball of $\R^d$ of center $0$. Let $\varphi: [0,\infty[\to[0,\infty[$ be a locally Lipschitz nondecreasing function such that
$\varphi(0)=0$.
By a solution of
\begin{equation}\label{eqb}
\Delta_k u=\varphi(u)\;\textrm{ in }\; B,
\end{equation}
we shall mean every function $u\in C(B)$ such that
$$
\int_{B} u(x)\,\Delta_k \psi(x)\, w_k(x)\,dx=\int_B \varphi(u(x))\,\psi(x)\,w_k(x)\,dx
$$
holds for every  $\psi\in \mathcal{C}_c^\infty(B)$.
%\subsection{Semilinear Dirichlet problem}
 We recall from Theorem~\ref{theo1} that if $\varphi\equiv0$ then $H_Bf$ is the unique solution of~(\ref{eqb}) satisfying $u=f$ on $\partial B$. In all the following, we assume that $\varphi$ does not vanish identically.
\begin{lemma}\label{carac}
Let $u\in C(\overline{B})$. Then $u$ is a solution of Eq. (\ref{eqb}), if and only if, $u+G^k_B(\varphi(u))=H_Bu$ on $B$.
\end{lemma}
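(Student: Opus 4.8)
The plan is to pivot on the three facts already established: that $G^k_B(\varphi(u))$ inverts $\Delta_k$ with a minus sign (formula (\ref{invb})), that the three notions of harmonicity coincide (Theorem \ref{fond1}), and that the Dirichlet problem (\ref{dir}) has a unique solution (Theorem \ref{theo1}). First I would check that the statement even makes sense: since $u\in C(\overline B)$ is bounded on the compact $\overline B$ and $\varphi$ is continuous, $\varphi(u)\in\mathcal{B}_b(B)$, so $G^k_B(\varphi(u))$ is well defined, belongs to $C_0(B)$, and satisfies $\Delta_k G^k_B(\varphi(u))=-\varphi(u)$ in the distributional sense by the Corollary preceding this lemma. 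Note also that $H_Bu$ depends only on $u|_{\partial B}$, because $H_B(x,\cdot)$ is carried by $\partial B$, and that $H_Bu$ is $\Delta_k$-harmonic on $B$.

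For the forward implication, suppose $u$ solves (\ref{eqb}) and set $h:=u+G^k_B(\varphi(u))$. By linearity of the distributional Laplacian and the defining identity of a solution, for every $\psi\in C^\infty_c(B)$ one has $\langle h,\Delta_k\psi\rangle_k=\langle u,\Delta_k\psi\rangle_k+\langle G^k_B(\varphi(u)),\Delta_k\psi\rangle_k=\int_B\varphi(u)\,\psi\,w_k\,dx-\int_B\varphi(u)\,\psi\,w_k\,dx=0$, so $h$ is $\Delta_k$-harmonic on $B$ in the distributional sense. Since $h\in C(B)$, Theorem \ref{fond1} upgrades this to classical $\Delta_k$-harmonicity. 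Moreover $h$ extends continuously to $\overline B$ with $h=u$ on $\partial B$, because $G^k_B(\varphi(u))\in C_0(B)$ vanishes on $\partial B$. Thus $h$ solves the Dirichlet problem (\ref{dir}) with boundary data $f=u|_{\partial B}$, and the uniqueness part of Theorem \ref{theo1} forces $h=H_Bf=H_Bu$, which is exactly the asserted identity $u+G^k_B(\varphi(u))=H_Bu$.

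For the converse, assume $u+G^k_B(\varphi(u))=H_Bu$ on $B$, that is $u=H_Bu-G^k_B(\varphi(u))$. I would test against $\psi\in C^\infty_c(B)$: since $H_Bu$ is $\Delta_k$-harmonic, hence distributionally harmonic by Theorem \ref{fond1}, we have $\langle H_Bu,\Delta_k\psi\rangle_k=0$; and by (\ref{invb}), $\langle G^k_B(\varphi(u)),\Delta_k\psi\rangle_k=-\int_B\varphi(u)\,\psi\,w_k\,dx$. Subtracting the second from the first yields $\langle u,\Delta_k\psi\rangle_k=\int_B\varphi(u)\,\psi\,w_k\,dx$, which is precisely the identity defining $u$ as a solution of (\ref{eqb}).

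The routine ingredients are the boundedness of $\varphi(u)$ and the bilinearity of the pairing $\langle\cdot,\cdot\rangle_k$. The one genuinely structural step — and the place I would be most careful — is the forward direction, where I must both produce harmonicity of $h$ and pin $h$ down as $H_Bu$; this is where the boundary-vanishing of the Green term and the uniqueness in Theorem \ref{theo1} are essential, and where I rely on $h$ being continuous up to $\overline B$ so that the Dirichlet uniqueness genuinely applies.
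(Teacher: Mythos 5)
Your proposal is correct and follows essentially the same route as the paper: the same decomposition $h:=u+G^k_B(\varphi(u))$, the same use of formula (\ref{invb}) to show $\Delta_k u=\varphi(u)$ is equivalent to $\Delta_k h=0$ in the distributional sense, and the same appeal to the uniqueness in Theorem \ref{theo1} to identify $h$ with $H_Bu$. You merely spell out a few steps the paper leaves implicit (the boundary-vanishing of the Green term and the passage through Theorem \ref{fond1}).
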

\begin{proof}
put $h:= u+G^k_B(\varphi(u))$. Clearly, $h\in C(\overline{B})$ and $h=u$ on $\partial B$. For every $\psi\in C_c^\infty(B)$, using Fubini's theorem and formula (\ref{invb}), we obtain
\begin{eqnarray*}
\int_B h(x)\Delta_k\psi(x)w_k(x)\,dx &=& \int_B u(x) \Delta_k \psi(x) w_k(x)\, dx + \int_BG^k_B(\varphi(u))(x)\Delta_k\psi(x)w_k(x)\,dx\\
&=& \int_B u(x) \Delta_k \psi(x) w_k(x)\, dx - \int_B\varphi(u(x))\psi(x) w_k(x)\,dx.
\end{eqnarray*}
So,  $\Delta_k u=\varphi(u)$ in $B$ if and only if $\Delta_k h=0$ in $B$. In this case, since $h=u$ on $\partial B$, the uniqueness of solution to problem (\ref{dir}) yields $h=H_Bu$ on $B$. This completes the proof.
\end{proof}
\begin{lemma}\label{comp}
Let $u,v\in C(\overline{B})$ be two solutions of Eq. (\ref{eqb}). If $u\geq v$ on $\partial B$, then $u\geq v$ on $B$.
\end{lemma}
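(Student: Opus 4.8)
The plan is to reduce the inequality to the vanishing of a single nonnegative function and then to run a maximum principle \emph{on the whole ball} $B$, using decisively that $B$ is $W$-invariant. First I would record, via Lemma~\ref{carac}, that $u$ and $v$ satisfy $u+G^k_B(\varphi(u))=H_Bu$ and $v+G^k_B(\varphi(v))=H_Bv$ on $B$; subtracting gives
\begin{equation*}
(v-u)+G^k_B\big(\varphi(v)-\varphi(u)\big)=H_B(v-u)\quad\text{on }B.
\end{equation*}
Because $u\ge v$ on $\partial B$ and, $B$ being $W$-invariant, the harmonic measure $H_B(x,\cdot)$ lives on $\partial B=\Gamma_B$ (Theorem~\ref{support}), the right-hand side is $\le 0$. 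It therefore suffices to prove that the continuous function $s:=\max(v-u,0)$, which is nonnegative and vanishes on $\partial B$, is identically zero on $B$.

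The key step, and the place where the difference part of $\Delta_k$ must be handled carefully, is to show that $s$ is $\Delta_k$-subharmonic on $B$, i.e. $\Delta_k s\ge 0$ in the distributional sense. On the open set $O:=\{v>u\}$ one has $s=v-u$ and, by monotonicity of $\varphi$, $\Delta_k(v-u)=\varphi(v)-\varphi(u)\ge 0$; moreover, for each root $\alpha$ the reflected point $\sigma_\alpha x$ still lies in $\overline B$ (here $W$-invariance of $B$ is used), so $s(\sigma_\alpha x)\ge (v-u)(\sigma_\alpha x)$, whence the difference quotients $-\tfrac{|\alpha|^2}{2}\,(s(x)-s(\sigma_\alpha x))/\langle\alpha,x\rangle^2$ in~(\ref{lapd}) only increase $\Delta_k s$ relative to $\Delta_k(v-u)$; thus $\Delta_k s\ge \Delta_k(v-u)\ge 0$ on $O$. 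On the interior of $\{v<u\}$ one has $s\equiv 0$, so the local part of~(\ref{lapd}) vanishes while each difference term equals $\tfrac{|\alpha|^2}{2}\,s(\sigma_\alpha x)/\langle\alpha,x\rangle^2\ge 0$; hence $\Delta_k s\ge 0$ there as well. Finally, across the interface $\{v=u\}$ the positive-part construction contributes only a nonnegative singular measure, exactly as for $\max(\cdot,0)$ of a smooth function, so $\Delta_k s\ge 0$ holds on all of $B$.

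With subharmonicity in hand I would conclude by the maximum principle on the $W$-invariant ball. Since $s$ is $\Delta_k$-subharmonic and continuous on $\overline B$, the equivalence of the analytic and probabilistic notions of harmonicity (the sub/superharmonic analogue of Theorem~\ref{fond1}, valid in the balayage-space framework) gives $H_Bs\ge s$ on $B$; as $H_B(x,\cdot)$ is supported on $\partial B$, where $s=0$, this yields $s(x)\le \int_{\partial B}s\,dH_B(x,\cdot)=0$ for every $x\in B$. Combined with $s\ge 0$ this forces $s\equiv 0$, i.e. $v\le u$ on $B$.

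The \textbf{main obstacle} is regularity: a solution is only assumed continuous and to satisfy~(\ref{eqb}) distributionally, so the pointwise manipulation of~(\ref{lapd}) is not immediately licit, and one must make sense of $\Delta_k s\ge 0$ directly at the distributional level, in particular controlling the difference terms on the reflecting hyperplanes $\{\langle\alpha,x\rangle=0\}$ and the singular contribution along $\{v=u\}$, and then invoke the balayage-space maximum principle to pass from $\Delta_k s\ge 0$ to $H_Bs\ge s$. An alternative that sidesteps the positive-part analysis is a direct strong maximum principle for $w=v-u$: at an interior point where $w$ attains its maximum $M>0$ over $\overline B$, the gradient vanishes, $\Delta w\le 0$, and since every $\sigma_\alpha x\in\overline B$ one has $w(x)-w(\sigma_\alpha x)\ge 0$, forcing $\Delta_k w\le 0$; but $\Delta_k w=\varphi(v)-\varphi(u)\ge 0$ there, and propagating the resulting equalities (again using that reflections stay in $\overline B$) contradicts $w\le 0$ on $\partial B$. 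Either route hinges on the same structural fact, namely that reflections of points of $B$ remain in $\overline B$, which is what makes the nonlocal part of $\Delta_k$ cooperate with the maximum principle.
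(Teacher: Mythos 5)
Your opening reduction via Lemma~\ref{carac} and the support property of $H_B(x,\cdot)$ is correct and matches the paper's starting point, but the heart of your argument --- that $s:=\max(v-u,0)$ satisfies $\Delta_k s\ge 0$ distributionally and that this implies $H_Bs\ge s$ --- is a genuine gap which you name as "the main obstacle" but do not close. The solutions $u,v$ are only continuous and solve (\ref{eqb}) in the distributional sense, so every pointwise use of (\ref{lapd}) in your argument (the sign of the difference quotients on $\{v>u\}$, the vanishing of the local part on the interior of $\{v<u\}$, the "nonnegative singular measure" on the interface) is not licit; what you would need is a Kato-type inequality for the nonlocal operator $\Delta_k$, which is nowhere established in the paper and is not routine because of the reflection terms. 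Even granting $\Delta_k s\ge 0$ distributionally, the passage to $H_Bs\ge s$ is not covered by Theorem~\ref{fond1}: that theorem treats only harmonic functions, and its proof upgrades the distributional identity to a classical one via hypoellipticity, an argument that does not transfer to one-sided inequalities. Your alternative "interior maximum point" route has the same defect: $w=v-u$ need not be differentiable, so $\nabla w=0$ and $\Delta w\le 0$ at a maximum are unavailable.

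The paper avoids all of this by a purely potential-theoretic localization that never differentiates anything. With $w:=u-v$, $\rho:=\varphi(u)-\varphi(v)$ and $D:=\{x\in B:\,w(x)<0\}$, it applies the kernel $H_D(x,\cdot)$ to the identity $w+G^k_B\rho=H_Bw$ of Lemma~\ref{carac}, uses $H_DH_Bw=H_Bw$ and the strong Markov property identity $G^k_B\rho-G^k_D\rho=H_DG^k_B\rho$ to arrive at $w(x)+G^k_D\rho(x)=H_Dw(x)$ for $x\in D$. On $D$ one has $\rho\le 0$ by monotonicity of $\varphi$, so the left-hand side is $<0$, while $H_Dw(x)\ge 0$ because the support of $H_D(x,\cdot)$ lies in $\overline{B}\setminus D$, where $w\ge 0$; this contradiction shows $D=\emptyset$. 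The essential idea you are missing is precisely this restriction to the set $D$ where $\rho$ has a definite sign, which substitutes for the unproven Kato inequality and makes the argument work at the bare continuity level of the solutions.
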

\begin{proof}
Define $w :=u-v$ and $\rho:= \varphi(u)-\varphi(v)$.   By the above lemma we have
\begin{equation}\label{rh}
w+G^k_B\rho=H_Bw \;\textrm{ on }\; B.
\end{equation}
Suppose that the open set
$
D:=\{x\in B;\; w(x)<0\}
$
is not empty.
Let $x\in D$. Clearly, $B$ contains the support of the measure $H_D(x,\cdot)$. Then applying $H_D(x,\cdot)$ to (\ref{rh}) we get
$$
H_Dw(x)+H_D\left(G^k_B\rho\right)(x)=H_DH_Bw(x)=H_Bw(x).
$$
Consequently,
\begin{equation}\label{rho}
H_Dw(x)= H_Bw(x)- H_D\left(G^k_B\rho\right)(x)= w(x)+\left(G^k_B\rho(x)-H_DG^k_B\rho(x)\right).
\end{equation}
On the other hand, using the strong Markov property,
\begin{equation}\label{mark}
G^k_B\rho(x)-G^k_D\rho(x)=E^x\left[\int_{\tau_D}^{\tau_B}\rho(X_s)\,ds\right]=E^x\left[E^{X_{\tau_D}}\left[\int_0^{\tau_B}\rho(X_s)\,ds\right]\right]
=H_DG^k_B\rho(x).
\end{equation}
Thus, it follows from (\ref{rho}) and (\ref{mark}) that $w(x)+G^k_D\rho(x)=H_Dw(x)$. But this is absurd since $w(x)+G^k_D\rho(x)<0$ and $H_Dw(x)\geq0$. Therefore, $D$ is empty and consequently $u\geq v$ on $B$.
\end{proof}
\begin{theorem}
For every $f\in C^+(\partial B)$, the semilinear Dirichlet problem
\begin{equation}\label{ndp}
\left\{
\begin{array}{rlll}\Delta_k u&=&\varphi(u)& \mbox{in}\ B,\\
u&=&f&\mbox{on}\,\partial B
\end{array}\right.
\end{equation}
admits one and only one solution $u\in C^+(\overline{B})$.
\end{theorem}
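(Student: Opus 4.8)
The plan is to derive existence from Schauder's fixed point theorem and uniqueness from the comparison principle of Lemma~\ref{comp}. By Lemma~\ref{carac}, a function $u\in C(\overline B)$ with $u=f$ on $\partial B$ solves~(\ref{ndp}) if and only if it is a fixed point of $T(u):=H_Bf-G^k_B(\varphi(u))$, because $H_Bu=H_Bf$ whenever $u=f$ on $\partial B$ (the harmonic measure of the centered ball lives on $\partial B$). Thus the whole problem reduces to producing a fixed point of $T$ lying in $C^+(\overline B)$.

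First I would turn $T$ into a genuine self-map of a fixed ball. Since $\varphi$ is only defined on $[0,\infty[$ and may grow, $T$ need not preserve any bounded set, so I set $L:=\|f\|$ and replace $\varphi$ by the bounded truncation $\psi$ given by $\psi(t)=0$ for $t\le0$, $\psi(t)=\varphi(t)$ for $0\le t\le L$, and $\psi(t)=\varphi(L)$ for $t\ge L$. Then $\psi$ is continuous, nonnegative and bounded by $\varphi(L)$, so the estimate $\|G^k_B1\|\le r^2/(2(m-2))$ from the proof of Proposition~\ref{compact} shows that $\widetilde T(u):=H_Bf-G^k_B(\psi(u))$ maps all of $C(\overline B)$ into the closed ball $K$ of radius $L+\varphi(L)\,r^2/(2(m-2))$. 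The map $\widetilde T$ is continuous, being the composition of the continuous Nemytskii map $u\mapsto\psi(u)$ with the linear operator $G^k_B$, and by Proposition~\ref{compact} its image is relatively compact in $C(\overline B)$, as it is the translate by the fixed function $H_Bf$ of a relatively compact family in $C_0(B)$. Schauder's theorem then yields $u\in K$ with $u=H_Bf-G^k_B(\psi(u))$, and $u=f$ on $\partial B$ since $G^k_B(\psi(u))\in C_0(B)$.

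The crux is to show that this fixed point satisfies $0\le u\le H_Bf$, which makes $\psi(u)=\varphi(u)$ and thereby promotes $u$ to an honest solution of~(\ref{ndp}). The upper bound is immediate: $\psi(u)\ge0$ forces $G^k_B(\psi(u))\ge0$, hence $u\le H_Bf\le L$. For the lower bound I would repeat the argument of Lemma~\ref{comp}. Suppose $D:=\{x\in B:\,u(x)<0\}$ is nonempty; since the ball $B$ is $W$-invariant, for each $x\in D$ the measure $H_D(x,\cdot)$ is supported by $\Gamma_D\subset\overline B\setminus D$. Applying $H_D(x,\cdot)$ to $u+G^k_B(\psi(u))=H_Bu$, using $H_DH_Bu=H_Bu$ on $D$ together with the strong Markov identity $G^k_B(\psi(u))-H_DG^k_B(\psi(u))=G^k_D(\psi(u))$ as in~(\ref{mark}), gives $H_Du(x)=u(x)+G^k_D(\psi(u))(x)$. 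But $\psi(u)=0$ on $D$, so $G^k_D(\psi(u))(x)=0$, forcing $H_Du(x)=u(x)<0$; this contradicts $H_Du(x)\ge0$, which holds because $u\ge0$ on $\overline B\setminus D\supset\Gamma_D$. Hence $D=\emptyset$, so $0\le u\le H_Bf$ and $u\in C^+(\overline B)$ solves~(\ref{ndp}).

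Uniqueness is then a direct consequence of Lemma~\ref{comp}: if $u,v\in C^+(\overline B)$ both solve~(\ref{ndp}), then $u=f=v$ on $\partial B$ gives $u\ge v$ and $v\ge u$ on $B$, whence $u=v$. I expect the main obstacle to be precisely the promotion carried out in the third paragraph, namely converting the truncated fixed point into a nonnegative solution dominated by $H_Bf$; this is where the $W$-invariance of $B$ (which guarantees $\Gamma_D\subset\overline B$, so that $H_Du\ge0$) is indispensable, whereas the continuity and compactness required for Schauder's theorem are routine once Proposition~\ref{compact} is in hand.
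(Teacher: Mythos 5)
Your proposal is correct and follows essentially the same route as the paper: truncate $\varphi$ at $\|f\|$, obtain a fixed point of $u\mapsto H_Bf-G^k_B(\phi(u))$ via Proposition~\ref{compact} and Schauder's theorem, then show $0\le u\le H_Bf$ by the same harmonic-measure/strong-Markov contradiction argument on $D=\{u<0\}$, with uniqueness from Lemma~\ref{comp}. The only cosmetic difference is that the paper restricts $T$ to the ball $\Lambda=\{\|u\|\le M\}$ rather than viewing it as a self-map of all of $C(\overline B)$ landing in a fixed ball, which changes nothing.
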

\begin{proof}
It follows from Lemma \ref{comp} that problem (\ref{ndp}) admits at most one solution. To prove the existence, in virtue of Lemma \ref{carac}, it will be sufficient to establish the existence of $u\in C^+(\overline{B})$ such that
\begin{equation}\label{ugkb}
u+G^k_B(\varphi(u))=H_Bf\; \textrm{ on }\; B.
\end{equation}
Since $G_B^k1\leq G^k1_B$, we immediately deduce using (\ref{g1b}) that $\sup_{x\in B}G_B^k1(x)<\infty$. Let $f\in C^+(\partial B)$, $a=\|f\|$ and $M= a+\varphi(a)\sup_{x\in B}G_B^k1(x)$.
Let $\phi$ be the function defined on $\R$ by
$$
\phi(t)=
\left\{
  \begin{array}{ll}
    0, & \hbox{if}\; t\leq 0 \\
    \varphi(t), & \hbox{if}\;0\leq t\leq a \\
    \varphi(a), & \hbox{if}\; t\geq a.
  \end{array}
\right.
$$
Let  $\Lambda:=\{u\in C(\overline{B});\;\|u\|\leq M\}$ and consider the operator $T: \Lambda\to C(\overline{B})$ defined  by
$$
Tu(x)=H_Bf(x)-G^k_B(\phi(u))(x),\; x\in \overline{B}.
$$
Since $0\leq\phi(t)\leq \varphi(a)$ for every $t\in\R$, we obtain
$$
|Tu(x)|\leq M
$$
for every $u\in \Lambda$ and every $x\in B$. This implies that $T(\Lambda)\subset\Lambda$. Now, let $(u_n)_n$ be a sequence in $\Lambda$ converging uniformly to $u\in \Lambda$. Let $\varepsilon>0$. Seeing that $\phi$ is uniformly continuous on the interval $[-M,M]$, we immediately deduce that there exists $n_0\in \N$ such that, for every $n\geq n_0$,
$$
\|\phi(u_n)-\phi(u)\|\leq\varepsilon.
$$
Then, for every $n\geq n_0$ and every $x\in B$,
$$
|Tu_n(x)-Tu(x)|\leq G^k_B\left(|\phi(u_n)-\phi(u)|\right)(x)\leq \varepsilon\,\sup_{x\in B}G_B^k1(x).
$$
This show that $(Tu_n)_n$ converges uniformly to $Tu$ and therefore $T$ is a continuous operator. Since $\Lambda$ is a closed bounded convex subset of $C(\overline{B})$ and, in virtue of Proposition \ref{compact}, $T(\Lambda)$ is relatively compact, the Schauder’s fixed point theorem ensures the existence of a function $u\in \Lambda$
such that
$$
u+G^k_B(\phi(u))=H_Bf\; \textrm{ on }\; B.
$$
Clearly $u\in C(\overline{B})$ and $u(x)\leq H_bf(x)\leq a$ for every $x\in B$. So, to obtain (\ref{ugkb}), we need to show that $\phi(u)=\varphi(u)$ on $B$, or equivalently, $u\geq 0$ on $B$. Assume that the open set $D:=\{x\in B,\;u(x)<0\}$ is not empty.  Let $x\in D$.  Then,
$$
H_Du(x)= H_D\left(H_Bu-G^k_B(\phi(u))\right)(x)
=H_Bu(x)-H_DG^k_B(\phi(u))(x).
$$
The same reasoning as in (\ref{mark}), based on the strong Markov property, shows that
$$
H_DG^k_B(\phi(u))(x)=G^k_B(\phi(u))(x)-G^k_D(\phi(u))(x).
$$
Thus, seeing that $\phi(u)=0$ on $D$,
\begin{eqnarray*}
H_Du(x)&=& H_Bu(x)-G^k_B(\phi(u))(x)+G^k_D(\phi(u))(x)\\
&=& u(x)+G^k_D(\phi(u))(x)\\
&=& u(x)<0.
\end{eqnarray*}
But, $H_Du(x)\geq 0$ since $u\geq 0$ on $B\setminus D$ which contain the support of $H_D(x,\cdot)$.
Hence $D$ must be empty and consequently $u\geq 0$ on $B$.
\end{proof}

\section{Keller-Osserman condition}
Let $\varphi:\R_+\rightarrow
\R_+$ be a locally Lipschitz  nondecreasing function on $\R_+$ such that
$\varphi(0)=0$.
Our purpose now consists in studying the existence of positive solution (in the distributional sense)
of the following equation
\begin{equation}\label{eeq}
\Delta_kv=\varphi(v)
\end{equation}
in the whole space $\R^d$. Such a solution will be called entire positive solution.

Let  $v\in C^2(\R^d)$ be radially symmetric on $\R^d$, i.e., $v(x)=u(|x|)$ for some function $u\in C^2(\R)$. Then, for every $\alpha\in R$,
$$
\langle\nabla v(x),\alpha\rangle = \frac{u'(|x|)}{|x|}\,\langle x,\alpha\rangle \quad \textrm{ and } \quad v(\sigma_\alpha x)=v(x).
$$
Thus, it follows from (\ref{lapd}) that
$$
\Delta_kv(x)=u''(|x|)+\frac{m-1}{|x|}u'(|x|).
$$
Hence,
$$
\Delta_kv(x)=\varphi(v(x)) \;\textrm{ if and only if }\; u''(|x|)+\frac{m-1}{|x|}u'(|x|)=\varphi(u(|x|)).
$$

It is well known from the general theory of ordinary differential equation that, for every $a\geq0$,
the equation
\begin{equation}\label{ord}
u''+\frac{m-1}{r}u'=\varphi(u)
\end{equation}
admits one and only one positive solution $u_a$  defined on a maximal interval $[0,R_a[$ such that $u_a'(0)=0$ and $u_a(0)=a$. Moreover, $u_0(r)=0$ for all $r\geq0$.
Writing (\ref{ord}) in the form $(r^{m-1}u_a'(r))'=r^{m-1}\varphi(u_a(r))$ and then integrating twice from $0$ to $r$, we obtain
\begin{equation}\label{inteq}
u_a(r)=a+\int_0^rt^{1-m}\int_0^ts^{m-1}\varphi\left(u_a(s)\right)\,ds\,dt\quad\textrm{ for all }\; r\in[0,R_a[.
\end{equation}
Moreover, for every $a>0$,
$$\lim_{r\to R_a}u_a(r)=\infty.$$
 Indeed, if $R_a<\infty$ then, under the maximality condition, we must have $u_a(r)\to\infty$ as $r\to R_a$. If $R_a=\infty$ then it follows from the above integral equation that, for every $r\in [0,\infty[$, $u_a(r)\geq a+\frac{\varphi(a)}{2m}r^2$  and therefore $u_a(r)\to\infty$ as $r\to \infty$.
 \begin{lemma}\label{uab}
\begin{enumerate}
\item For every $0\leq a\leq b$, we have $u_a\leq u_b$ and $R_a\geq R_b$.
\item For every $0\leq a$, we have  $\lim_{b\to a}R_b=R_a$.
\end{enumerate}
\end{lemma}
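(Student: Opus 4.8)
\emph{Proof plan.} For part (1) the right tool is the integral equation (\ref{inteq}), which reduces both claims to a comparison argument. Fix $0\le a\le b$; by uniqueness we may assume $a<b$ and work on the common interval $[0,\min(R_a,R_b)[$. Setting $w:=u_b-u_a$, I would argue by first crossing: $w(0)=b-a>0$, so if $w$ ever failed to stay positive there would be a smallest $r_0$ with $w(r_0)=0$ and $w>0$ on $[0,r_0[$. On $[0,r_0]$ we have $u_a\le u_b$, hence $\varphi(u_a)\le\varphi(u_b)$ since $\varphi$ is nondecreasing, and then (\ref{inteq}) gives
$$
w(r_0)=(b-a)+\int_0^{r_0}t^{1-m}\int_0^t s^{m-1}\big(\varphi(u_b(s))-\varphi(u_a(s))\big)\,ds\,dt\ge b-a>0,
$$
a contradiction; thus $u_a\le u_b$ throughout. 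The inequality $R_a\ge R_b$ then follows at once: if $R_a<R_b$, then $u_b$ is bounded on the compact interval $[0,R_a]\subset[0,R_b[$, forcing $u_a\le u_b$ to stay bounded on $[0,R_a[$ and contradicting $u_a(r)\to\infty$ as $r\to R_a$.

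For part (2), part (1) shows $b\mapsto R_b$ is nonincreasing, so the one-sided limits exist. First I would record lower semicontinuity, $\liminf_{b\to a}R_b\ge R_a$: for every $\rho<R_a$ the solution $u_a$ is finite on the compact interval $[0,\rho]$, so the standard continuous dependence of solutions of (\ref{ord}) on the initial value (provable from (\ref{inteq}) by a Gronwall estimate on $[0,\rho]$) guarantees that $u_b$ is still defined on $[0,\rho]$, and converges uniformly to $u_a$ there, once $b$ is close to $a$; hence $R_b>\rho$, and $\rho\uparrow R_a$ gives the claim. Combined with $R_b\le R_a$ for $b>a$, this already yields right-continuity $\lim_{b\downarrow a}R_b=R_a$. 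What remains is the left-continuity $\lim_{b\uparrow a}R_b\le R_a$, which is where the only genuine difficulty lies, since for $b<a$ the smaller solutions live at least as long ($R_b\ge R_a$) and one must bound \emph{how much} longer.

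If $R_a=\infty$ there is nothing left to prove, so assume $R_a<\infty$; then $u_a$ blows up at $R_a$, and the elementary energy bound $u_a'\le\sqrt{2\Phi(u_a)}$, with $\Phi(t):=\int_0^t\varphi(s)\,ds$, forces $\int^\infty dt/\sqrt{\Phi(t)}<\infty$. The plan is to couple this with a finite-time blow-up estimate. Using $u_b'\ge0$ and $(r^{m-1}u_b')'=r^{m-1}\varphi(u_b)$, one obtains on a fixed interval $[r_*,2r_*]$ a lower bound $u_b'(r)\ge c\int_{r_*}^r\varphi(u_b)$ with $c=c(m)>0$; the resulting Volterra inequality makes $u_b$ dominate the solution $v$ of $v''=c\,\varphi(v)$, $v(r_*)=u_b(r_*)$, $v'(r_*)=0$, whose blow-up time past $r_*$ is
$$
\tau\big(u_b(r_*)\big)=\int_{u_b(r_*)}^\infty\frac{dt}{\sqrt{2c\,(\Phi(t)-\Phi(u_b(r_*)))}},
$$
so that $R_b\le r_*+\tau(u_b(r_*))$. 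Given $\varepsilon>0$ I would then pick $r_*<R_a$ with $R_a-r_*<\varepsilon$ and $u_a(r_*)$ so large that $\tau$ is below $\min(r_*,\varepsilon)$ there; continuous dependence makes $u_b(r_*)$ close to $u_a(r_*)$, and monotonicity of $\tau$ gives $R_b\le r_*+\tau(u_b(r_*))<R_a+2\varepsilon$ for $b$ near $a$, whence $\lim_{b\uparrow a}R_b\le R_a$.

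The crux, and the step I expect to be most delicate, is verifying that $\tau(p)\to0$ as $p\to\infty$. This is precisely a Keller--Osserman phenomenon: one must convert the convergence of $\int^\infty dt/\sqrt{\Phi}$ into vanishing of the tail defining $\tau(p)$, controlling the singular contribution near $t=p$ through $\Phi(t)-\Phi(p)\ge\varphi(p)(t-p)$ and the fact that $\varphi(p)\to\infty$ (which itself follows from $\int^\infty dt/\sqrt{\Phi}<\infty$). Everything else reduces to the comparison and continuous-dependence arguments above.
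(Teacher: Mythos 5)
Your proof of part (1) is correct and essentially identical to the paper's: a first-crossing argument in the integral equation (\ref{inteq}), using that $\varphi$ is nondecreasing, followed by the observation that $u_a\leq u_b$ and $u_a(R_a^-)=\infty$ force $R_a\geq R_b$. For part (2), however, you take a genuinely different and much more elaborate route. The paper's argument is two lines: by monotonicity the pointwise limit $u=\lim_{b\to a}u_b$ exists, passing to the limit in (\ref{inteq}) shows $u$ solves the same integral equation near $0$, so $u=u_a$ by uniqueness, "hence" $\lim_{b\to a}R_b=R_a$. You instead split the claim into lower semicontinuity of $\rho\mapsto R_\rho$ (via continuous dependence/Gronwall on compact subintervals of $[0,R_a[$, which together with monotonicity already gives $\lim_{b\downarrow a}R_b=R_a$) and the delicate upper bound $\limsup_{b\uparrow a}R_b\leq R_a$ when $R_a<\infty$, which you obtain from a quantitative blow-up-time estimate: once $u_b(r_*)$ is large, the Volterra comparison on $[r_*,2r_*]$ forces blow-up within time $\tau(u_b(r_*))$, and $\tau(p)\to0$ as $p\to\infty$ by a Keller--Osserman computation. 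This costs considerably more work, but it buys a complete argument for precisely the direction the paper glosses over: pointwise convergence $u_b\to u_a$ on a small interval does not by itself yield convergence of the blow-up radii, and your $\tau(p)\to0$ step is exactly what fills that gap. Two minor points: the "monotonicity of $\tau$" you invoke at the end is neither obvious nor needed --- $\lim_{p\to\infty}\tau(p)=0$ already bounds $\tau(q)$ uniformly for all large $q$, which suffices since continuous dependence keeps $u_b(r_*)$ large; and the crux step $\tau(p)\to0$, which you leave sketched, does go through by splitting the integral at $2p$, using $\Phi(t)-\Phi(p)\geq\varphi(p)(t-p)$ together with $\varphi(p)/p\to\infty$ on the near range and $\Phi(t)-\Phi(p)\geq\tfrac12\Phi(t/2)$ together with $\int^\infty dt/\sqrt{\Phi(t)}<\infty$ on the far range.
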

\begin{proof}
\begin{enumerate}
\item
Let $0<a<b$ and assume that $u_a(r)=u_b(r)$ for some $r>0$. Since $u_a(0)=a<b=u_b(0)$, without loss of generality we can assume that $u_a(t)<u_b(t)$ for all $0\leq t<r$. Then, using the fact that $\varphi$ is increasing, we obtain
$$
a+\int_0^rt^{1-m}\int_0^ts^{m-1}\varphi\left(u_a(s)\right)\,ds\,dt <b+ \int_0^rt^{1-m}\int_0^ts^{m-1}\varphi\left(u_b(s)\right)\,ds\,dt.
$$
Thus, $u_a(r)<u_b(r)$ contradicting $u_b(r)=u_a(r)$. Since $u_a(R_a)=\infty$ and $u_a\leq u_b$, then we must have $R_a\geq R_b$.
\item Let $0\leq a$ and denote $u=\lim_{b\to a}u_b$. Then it follows from (\ref{inteq}) that for $r$ small enough,
 $$
 u(r)=a+\int_0^rt^{1-m}\int_0^ts^{m-1}\varphi\left(u(s)\right)\,ds\,dt.
 $$
 This means that $u=u_a$ and hence $\lim_{b\to a}R_b=R_a$.
\end{enumerate}
\end{proof}

\begin{lemma}
For every $a>0$, we have
\begin{equation}
\label{ko}
\int_a^\infty\frac{dt}{\sqrt{\int_a^t\varphi(s)ds}}\leq \sqrt 2 R_a \leq \sqrt{m}\int_a^\infty\frac{dt}{\sqrt{\int_a^t\varphi(s)ds}}.
\end{equation}
\end{lemma}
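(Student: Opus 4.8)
The plan is to reduce everything to the radial ODE $(r^{m-1}u_a')'=r^{m-1}\varphi(u_a)$ and to run a standard energy argument. Write $\Phi(t):=\int_a^t\varphi(s)\,ds$, so that $\Phi(a)=0$ and $\frac{d}{dr}\Phi(u_a(r))=\varphi(u_a(r))\,u_a'(r)$. First I would record that $u_a$ is nondecreasing: integrating $(r^{m-1}u_a')'=r^{m-1}\varphi(u_a)$ from $0$ to $r$ and using $u_a'(0)=0$ gives $r^{m-1}u_a'(r)=\int_0^r s^{m-1}\varphi(u_a(s))\,ds\ge0$. Since moreover $u_a(r)\to\infty$ as $r\to R_a$, the map $r\mapsto u_a(r)$ is an admissible change of variable carrying $[0,R_a[$ onto $[a,\infty[$, and it is precisely this substitution that converts the two pointwise estimates below into the claimed integral inequalities.

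Next I would establish the energy identity. Multiplying the ODE by $u_a'$, noting $u_a'u_a''=\tfrac12((u_a')^2)'$ and $\varphi(u_a)u_a'=(\Phi(u_a))'$, and integrating from $0$ to $r$ using $u_a'(0)=0$ and $\Phi(a)=0$, yields
$$\frac12\big(u_a'(r)\big)^2+\int_0^r\frac{m-1}{\rho}\big(u_a'(\rho)\big)^2\,d\rho=\Phi\big(u_a(r)\big).$$
The left-hand inequality of~(\ref{ko}) is then immediate: dropping the nonnegative dissipation integral gives $u_a'(r)\le\sqrt{2\Phi(u_a(r))}$, hence $u_a'(r)/\sqrt{\Phi(u_a(r))}\le\sqrt2$; integrating over $[0,R_a[$ and substituting $t=u_a(r)$ turns this into $\int_a^\infty dt/\sqrt{\Phi(t)}\le\sqrt2\,R_a$.

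For the right-hand inequality I need a matching lower bound for $(u_a')^2$, and this is the only genuinely delicate point, since I must bound the first-order dissipation term $\int_0^r\frac{m-1}{\rho}(u_a')^2$ from above. The key pointwise estimate is $u_a'(r)\le\frac{r}{m}\varphi(u_a(r))$, which follows from $r^{m-1}u_a'(r)=\int_0^r s^{m-1}\varphi(u_a(s))\,ds\le\varphi(u_a(r))\int_0^r s^{m-1}\,ds$, where I used that $\varphi\circ u_a$ is nondecreasing. Feeding this into the dissipation term gives $\frac{m-1}{\rho}(u_a'(\rho))^2\le\frac{m-1}{m}\frac{d}{d\rho}\Phi(u_a(\rho))$, so integration yields $\int_0^r\frac{m-1}{\rho}(u_a')^2\,d\rho\le\frac{m-1}{m}\Phi(u_a(r))$. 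Combined with the energy identity this gives $\frac12(u_a'(r))^2\ge\frac1m\Phi(u_a(r))$, i.e.\ $u_a'(r)/\sqrt{\Phi(u_a(r))}\ge\sqrt{2/m}$; integrating and substituting $t=u_a(r)$ as before produces $\int_a^\infty dt/\sqrt{\Phi(t)}\ge\sqrt{2/m}\,R_a$, which rearranges to $\sqrt2\,R_a\le\sqrt m\int_a^\infty dt/\sqrt{\Phi(t)}$.

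I expect the main obstacle to be exactly this control of the dissipation integral; once the pointwise bound $u_a'\le\frac rm\varphi(u_a)$ is in hand the term collapses into a derivative of $\Phi(u_a)$ and both inequalities drop out of the same change of variables. A minor technical point to verify is the behaviour near $r=0$, where $\Phi(u_a(r))\to0$: the substitution $t=u_a(r)$ makes both sides improper integrals at the lower endpoint $t=a$, and they are to be read as values in $[0,\infty]$, so the estimates hold verbatim (in particular the case $R_a=\infty$ is permitted and then forces the integrals to diverge as well).
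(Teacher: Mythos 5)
Your proposal is correct and follows essentially the same route as the paper: both rest on integrating $(r^{m-1}u_a')'=r^{m-1}\varphi(u_a)$ to get $0\le u_a'(r)\le \frac{r}{m}\varphi(u_a(r))$, then multiplying by $u_a'$ and integrating to obtain the two-sided bound $\frac1m\int_a^{u_a(r)}\varphi(s)\,ds\le\frac12(u_a'(r))^2\le\int_a^{u_a(r)}\varphi(s)\,ds$, and finally dividing by $\sqrt{\smash[b]{\int_a^{u_a(r)}\varphi}}$ and changing variables. Your ``energy identity then bound the dissipation term'' presentation is just a reordering of the paper's pointwise sandwich $\frac{\varphi(u_a)}{m}\le u_a''\le\varphi(u_a)$ followed by integration.
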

\begin{proof}
Writing (\ref{ord}) in the form
$(r^{m-1}u_a'(r))'=r^{m-1}\varphi(u_a(r))$ and then integrating from $0$ to $r$, we get
$$
u_a'(r)=r^{1-m}\int_0^rs^{m-1}\varphi(u_a(s))\,ds.
$$
This shows that $u_a'\geq 0$ and thereby $u_a$ is  nondecreasing  on $[0,R_a]$. We also deduce, using the fact that $u_a$ and $\varphi$ are nondecreasing, that
$$
u_a'(r)\leq r^{1-m}\varphi(u_a(r))\int_0^rs^{m-1}ds = \frac rm \varphi(u_a(r)).
$$
Combining this inequality with $u_a'\geq 0$, it follows from (\ref{ord}) that
$$
\frac{\varphi(u_a(r))}{m}\leq u_a''(r)\leq \varphi(u_a(r)).
$$
Next, multiplying the last inequalities by $u_a'(r)$ and then integrate from $0$ to $r$, we obtain
$$
\frac{1}{m}\int_{a}^{u_a(r)}\varphi(s)\,ds\leq \frac{(u_a'(r))^2}{2}\leq \int_{a}^{u_a(r)}\varphi(s)\,ds
$$
which is equivalent to
$$
\frac{u_a'(r)}{\sqrt{\int_{a}^{u_a(r)}\varphi(s)\,ds}}\leq \sqrt 2 \leq \sqrt{m}\frac{u_a'(r)}{\sqrt{\int_{a}^{u_a(r)}\varphi(s)\,ds}}.
$$
Finally, integrating again from $0$ to $r$ and then making $r$ tends to $R_a$, we obtain (\ref{ko}).
\end{proof}

\begin{lemma}
Assume that there exists some constant $a>0$ such that
\begin{equation}\label{negcond}
\int_a^\infty\frac{dt}{\sqrt{\int_0^t\varphi(s)ds}}<\infty.
\end{equation}
 Then for every $r>0$ there exists $b>0$ such that $r=R_b$.
\end{lemma}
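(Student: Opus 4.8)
The plan is to show that $b\mapsto R_b$, regarded as a map on $(0,\infty)$, is a continuous surjection onto $(0,\infty)$. By part~(1) of Lemma~\ref{uab} this map is non-increasing, and by part~(2) it is continuous, with $\lim_{b\to 0^+}R_b=R_0=\infty$ (recall $u_0\equiv 0$, so its maximal interval is all of $[0,\infty)$). Consequently, by the intermediate value theorem, the range of $b\mapsto R_b$ contains the open interval $(\lim_{b\to\infty}R_b,\infty)$, and every $r>0$ will be of the form $R_b$ as soon as I establish that $\lim_{b\to\infty}R_b=0$. This limit is the whole content of the lemma.

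To reach it I would invoke the upper estimate in~(\ref{ko}). Writing $\Phi(t):=\int_0^t\varphi(s)\,ds$ and noting $\int_b^t\varphi=\Phi(t)-\Phi(b)$, the right inequality in~(\ref{ko}) reads $R_b\le\sqrt{m/2}\,J(b)$, where $J(b):=\int_b^\infty(\Phi(t)-\Phi(b))^{-1/2}\,dt$; thus it suffices to prove $J(b)\to0$. Since $\varphi$ is non-decreasing and does not vanish identically, $\Phi$ is $C^1$, eventually strictly increasing, and $\Phi(b)\to\infty$, so for each large $b$ there is a unique $c=c(b)>b$ with $\Phi(c)=2\Phi(b)$, and $c(b)\ge b\to\infty$. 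I then split $J(b)=\int_b^c+\int_c^\infty$. On the far piece, $\Phi(b)=\tfrac12\Phi(c)\le\tfrac12\Phi(t)$ for $t\ge c$, so $\Phi(t)-\Phi(b)\ge\tfrac12\Phi(t)$ and $\int_c^\infty(\Phi(t)-\Phi(b))^{-1/2}dt\le\sqrt2\int_{c(b)}^\infty\Phi(t)^{-1/2}dt$, which tends to $0$ because $c(b)\to\infty$ and the integral in~(\ref{negcond}) converges. On the near piece I use $\Phi(t)-\Phi(b)=\int_b^t\varphi\ge(t-b)\varphi(b)$ to get $\int_b^c(\Phi(t)-\Phi(b))^{-1/2}dt\le 2\sqrt{(c-b)/\varphi(b)}$, and from $\Phi(b)=\Phi(c)-\Phi(b)=\int_b^c\varphi\ge(c-b)\varphi(b)$ I deduce $c-b\le\Phi(b)/\varphi(b)$, so the near piece is at most $2\sqrt{\Phi(b)}/\varphi(b)$.

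The main obstacle is to show this last quantity tends to $0$, i.e. that $\varphi$ grows super-linearly; here the monotonicity hypothesis on $\varphi$ is essential. Using $\Phi(b)\le b\,\varphi(b)$ one has $\sqrt{\Phi(b)}/\varphi(b)\le\sqrt{b/\varphi(b)}$, and the same bound turns~(\ref{negcond}) into $\int^\infty(t\varphi(t))^{-1/2}dt<\infty$. Estimating the tail of this convergent integral over a block $[b,2b]$ via $\varphi(t)\le\varphi(2b)$ gives $\tfrac1{\sqrt2}\sqrt{b/\varphi(2b)}\le\int_b^{2b}(t\varphi(t))^{-1/2}dt\to0$, whence $\varphi(s)/s\to\infty$ and therefore $b/\varphi(b)\to0$. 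This forces the near piece to $0$; together with the far piece it yields $J(b)\to0$, hence $R_b\to0$, which completes the argument. I expect the delicate point to be precisely this near-piece bound: the upper estimate in~(\ref{ko}) is phrased with $\int_b^t\varphi$, whereas the hypothesis~(\ref{negcond}) involves $\int_0^t\varphi$, and reconciling the two near $t=b$ is exactly what requires extracting super-linear growth of $\varphi$ from~(\ref{negcond}).
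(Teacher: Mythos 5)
Your proof is correct, and while it shares the paper's overall skeleton --- reduce the claim, via monotonicity and continuity of $b\mapsto R_b$ (Lemma \ref{uab}) and the intermediate value theorem, to the two limits $\lim_{b\to 0^+}R_b=\infty$ and $\lim_{b\to\infty}R_b=0$ --- you establish both limits by genuinely different means. For $b\to 0^+$ the paper argues through the lower bound in (\ref{ko}): local Lipschitzness of $\varphi$ at $0$ gives $\int_0^t\varphi(s)\,ds\le ct^2$ near $0$, hence divergence of the Keller--Osserman integral as $a\to 0^+$; you instead simply invoke Lemma \ref{uab}(2) at $a=0$ together with $R_0=\infty$, which is shorter but leans entirely on the continuity lemma at the endpoint (whose proof in the paper is itself rather terse), whereas the paper's route is self-contained. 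For the harder limit $b\to\infty$ the paper rescales $\int_\rho^\infty\bigl(\int_\rho^t\varphi\bigr)^{-1/2}dt$ by $t\mapsto\rho t$ and appeals to dominated convergence after asserting that $\varphi(t)/t\to\infty$ ``immediately'' from (\ref{negcond}); you instead split the integral at the median point $\Phi(c)=2\Phi(b)$ and bound the two pieces explicitly, proving the super-linearity $\varphi(t)/t\to\infty$ by a block estimate over $[b,2b]$. Your version is more elementary and quantitative: it makes explicit the deduction the paper leaves implicit, and it avoids having to exhibit a dominating function for the paper's DCT step (which is the least transparent point of the published argument, since $\varphi(\rho s)/\rho$ is not obviously monotone in $\rho$). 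The price is a longer computation; both are valid.
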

\begin{proof}
We first note that, for every $\rho>0$, $R_\rho<\infty$ and thus, by the second assertion in Lemma \ref{uab}, the function $\rho\to R_\rho$ is continuous on the interval $]0,\infty[$. So, to complete the proof, it  suffices to show that
\begin{equation}\label{0infty}
\lim_{\rho\to\infty}R_\rho=0 \;\textrm{ and } \; \lim_{\rho\to 0}R_\rho=\infty.
\end{equation}
Since $\varphi$ is locally Lipschitz and $\varphi(0)=0$, there exist $\eta>0$ and $c>0$ such that
$$
\int_0^t\varphi(s)\,ds\leq  c\, t^2\; \textrm{ for all }\; 0\leq t\leq \eta.
$$
This leads to
$$
\lim_{\rho\to 0^+}\int_\rho^\infty\frac{dt}{\sqrt{\int_0^t\varphi(s)ds}}=\int_0^\infty\frac{dt}{\sqrt{\int_0^t\varphi(s)ds}}=\infty
$$
and hence, by (\ref{ko}), $\lim_{\rho\to 0}R_\rho=\infty$. On the other hand,
we immediately deduce from the condition (\ref{negcond}) that $\varphi(t)/t$ increases to $\infty$ when $t$ tends to $\infty$. Thus, by writing
$$
\int_\rho^\infty\frac{dt}{\sqrt{\int_\rho^t\varphi(s)ds}}= \int_1^\infty\frac{dt}{\sqrt{\int_1^t\frac{\varphi(\rho s)}{\rho}ds}},
$$
we obtain using the dominated convergence theorem that
$$\lim_{\rho\to\infty}\int_\rho^\infty\frac{dt}{\sqrt{\int_\rho^t\varphi(s)ds}}=0$$
 and hence, by (\ref{ko}), $\lim_{\rho\to\infty}R_\rho=0$.
\end{proof}
\begin{theorem}
Let $r>0$. Then, the problem
\begin{equation}\label{dirinfty}
\left\{
\begin{array}{rlll}\Delta_k u&=&\varphi(u)& \mbox{in}\ B_r,\\
u&=&\infty&\mbox{on}\;\partial B_r.
\end{array}\right.
\end{equation}
admits a positive solution if and only if (\ref{negcond}) holds for some constant $a>0$.
\end{theorem}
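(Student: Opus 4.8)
The plan is to pass entirely to the radial picture, where Eq.~(\ref{eeq}) becomes the ODE~(\ref{ord}) governed by the profiles $u_a$ and their blow-up radii $R_a$, and then to combine the Keller--Osserman estimate~(\ref{ko}) with the comparison principle of Lemma~\ref{comp}. A preliminary observation I would record is that the integrals $\int_a^\infty dt/\sqrt{\int_0^t\varphi}$ and $\int_a^\infty dt/\sqrt{\int_a^t\varphi}$ converge or diverge simultaneously, and independently of the choice of $a>0$: their numerators differ only by the constant $\int_0^a\varphi$, while $\int_0^t\varphi\to\infty$ as $t\to\infty$ (because $\varphi$ is nondecreasing and nontrivial). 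Consequently~(\ref{negcond}) is equivalent to convergence of the right-hand integral in~(\ref{ko}), that is, to $R_a<\infty$ for every $a>0$.

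For the sufficiency, I would assume~(\ref{negcond}) holds and invoke the preceding lemma to produce $b>0$ with $R_b=r$. Then $u_b$ is a positive $C^2$ solution of~(\ref{ord}) on $[0,r)$ with $u_b'(0)=0$ and $u_b(\rho)\to\infty$ as $\rho\to r$. The radial function $v_b(x):=u_b(|x|)$ is thus $C^2$ on $B_r$ (the condition $u_b'(0)=0$ giving regularity at the origin), positive, solves $\Delta_k v_b=\varphi(v_b)$ in $B_r$ by the radial reduction recalled just before~(\ref{ord}), and satisfies $v_b=\infty$ on $\partial B_r$. Hence $v_b$ is a positive solution of~(\ref{dirinfty}).

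For the necessity I would argue by contraposition. Assuming~(\ref{negcond}) fails, the observation above gives $R_a=\infty$ for every $a>0$, so each profile $u_a$ is finite on $[0,\infty)$ and $v_a(x):=u_a(|x|)$ is a global positive solution of~(\ref{eeq}) which is finite and continuous on $\overline{B_r}$. Suppose, for contradiction, that~(\ref{dirinfty}) admits a positive solution $u$; then $u\in C(B_r)$ and $u(x)\to\infty$ as $|x|\to r$, the blow-up being uniform by compactness of $\partial B_r$. Fix $a>0$ and put $M:=u_a(r)<\infty$; since $u_a$ is nondecreasing one has $v_a\le M$ on $\overline{B_r}$, while the uniform blow-up of $u$ provides $\rho<r$ with $u\ge M$ on $\partial B_\rho$, whence $u\ge M\ge v_a$ there. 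On the ball $B_\rho$ both $u$ and $v_a$ lie in $C(\overline{B_\rho})$ and solve~(\ref{eqb}), so Lemma~\ref{comp} yields $u\ge v_a$ on $B_\rho$, in particular $u(0)\ge v_a(0)=a$. As $a>0$ is arbitrary this forces $u(0)=\infty$, contradicting $u\in C(B_r)$; hence no solution of~(\ref{dirinfty}) can exist once~(\ref{negcond}) fails.

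The step I expect to be the main obstacle is exactly the boundary behaviour in the necessity direction: since $u=\infty$ on $\partial B_r$, the comparison principle cannot be applied on $\overline{B_r}$ itself. The resolution is to retreat to the interior balls $B_\rho$ with $\rho<r$ and to use the uniform blow-up of $u$ near $\partial B_r$ to secure the boundary domination $u\ge v_a$ on $\partial B_\rho$, which is precisely what makes Lemma~\ref{comp} applicable; letting $a\to\infty$ then contradicts the interior finiteness of $u$. The only other points requiring care are the justification that the blow-up at $\partial B_r$ is uniform (a compactness argument) and the equivalence of the two integral conditions noted in the first paragraph.
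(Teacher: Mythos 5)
Your proposal is correct and follows essentially the same route as the paper: sufficiency via the lemma producing $b$ with $R_b=r$ and the radial solution $v_b(x)=u_b(|x|)$, and necessity by comparing a putative solution $u$ with the entire radial solutions $v_a$ (available since $R_a=\infty$ when~(\ref{negcond}) fails) to force $u(0)\geq a$ for all $a$. In fact your treatment of the comparison step is slightly more careful than the paper's, which invokes Lemma~\ref{comp} directly on $B_r$ even though $u\notin C(\overline{B_r})$; your retreat to interior balls $B_\rho$ using the uniform blow-up of $u$ is exactly the "careful application of Lemma~\ref{comp}" that the authors spell out only in the proof of the subsequent theorem.
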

\begin{proof}
Assume that (\ref{negcond}) holds for some constant $a>0$. Then, by the previous lemma, there exists $b>0$ such that $r=R_b$. We denote $v_b$ the function defined on the open ball $B_r$ by $v_b(x)=u_b(|x|)$. Since $v_b\in C^2(B_r)$, it follows from (\ref{ti}) that $\Delta_k v_b=\varphi(v_b)$ in the distributional sense. Hence, $v_b$ is a solution of the problem (\ref{dirinfty}). Conversely, let $u$ be a positive solution of (\ref{dirinfty}). Proceeding by contradiction, assume that (\ref{negcond}) does not holds for all $a>0$, that is,
$$
\int_a^\infty\frac{dt}{\sqrt{\int_0^t\varphi(s)ds}}=\infty \;\textrm{ for all }\; a>0.
$$
Then it follows from (\ref{ko}) that $R_a=\infty$ for all $a>0$. Thus, for every $a>0$, the function $v_a$ defined on $\R^d$ by $v_a(x)=u_a(|x|)$ is an entire positive solution of Eq. (\ref{eeq}). By Lemma \ref{comp}, $v_a\leq u$ on $B_r$ for all $a>0$. In particular, $a=v_a(o)\leq u(0)$ for all $a>0$ and hence, by letting $a$ tend to $\infty$, $u(0)=\infty$ which is impossible. Hence (\ref{negcond}) holds true for some constant $a>0$.
\end{proof}

\begin{theorem}
Eq. (\ref{eeq}) admits an entire positive solution, if and only if, $\varphi$ satisfies the Keller-Osserman condition, i.e., there exists $a>0$ such that
$$
\int_a^\infty\frac{dt}{\sqrt{\int_0^t\varphi(s)ds}}=\infty.
$$
\end{theorem}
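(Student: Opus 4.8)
The plan is to read the equivalence off the material already assembled: the two-sided estimate (\ref{ko}) relating the blow-up radius $R_a$ to the relevant integral, the preceding existence result for the blow-up problem (\ref{dirinfty}), and the comparison principle of Lemma \ref{comp}. The only genuine subtlety is to reconcile the integral $\int_a^\infty dt/\sqrt{\int_0^t\varphi}$ of the statement with the integral $\int_a^\infty dt/\sqrt{\int_a^t\varphi}$ occurring in (\ref{ko}). Since $\int_a^t\varphi\le\int_0^t\varphi$ for $t\ge a$, the integrand of the former is dominated by that of the latter, so divergence of the statement's integral forces divergence of the one appearing in (\ref{ko}).

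For sufficiency (Keller--Osserman $\Rightarrow$ existence), I would suppose $\int_a^\infty dt/\sqrt{\int_0^t\varphi(s)ds}=\infty$ for some $a>0$. By the domination just noted, $\int_a^\infty dt/\sqrt{\int_a^t\varphi(s)ds}=\infty$ as well, and the left-hand estimate in (\ref{ko}) then gives $R_a=\infty$. Hence the profile $u_a$ is defined on all of $[0,\infty[$, and I set $v_a(x):=u_a(|x|)$. By the computation preceding (\ref{ord}), a radial $C^2$ function satisfies $\Delta_k v=\varphi(v)$ exactly when its profile solves (\ref{ord}); thus $v_a\in C^2(\R^d)$ solves $\Delta_k v_a=\varphi(v_a)$ classically, hence in the distributional sense. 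As $u_a$ is nondecreasing with $u_a(0)=a>0$, the function $v_a$ is an entire positive solution of (\ref{eeq}).

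For necessity I would argue by contraposition, assuming that (\ref{negcond}) holds for some $a>0$ (i.e. Keller--Osserman fails) and showing that no entire positive solution exists. Under (\ref{negcond}) the preceding lemma supplies, for every $r>0$, a number $b>0$ with $R_b=r$; write $v_b(x)=u_b(|x|)$ for the associated solution of (\ref{dirinfty}) on $B_r$. Suppose, for contradiction, that $U$ is an entire positive solution of (\ref{eeq}), fix $r>0$, and set $C:=\max_{\overline{B_r}}U<\infty$. Since $u_b(r')\to\infty$ as $r'\uparrow r$, I choose $r'<r$ with $u_b(r')\ge C$; then $v_b\ge U$ on $\partial B_{r'}$, both functions lie in $C(\overline{B_{r'}})$ and solve (\ref{eqb}) on $B_{r'}$, so Lemma \ref{comp} gives $v_b\ge U$ on $B_{r'}$, whence $U(0)\le v_b(0)=b$. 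Letting $r\to\infty$, the map $\rho\mapsto R_\rho$ is nonincreasing by Lemma \ref{uab} and finite for every $\rho>0$ under (\ref{negcond}), so the $b=b(r)$ solving $R_b=r$ must tend to $0$ as $r\to\infty$ (otherwise $b(r)\ge\delta>0$ along some sequence $r\to\infty$ would force $r=R_{b(r)}\le R_\delta<\infty$). Hence $U(0)\le\inf_{r>0}b(r)=0$, contradicting $U(0)>0$. Therefore Keller--Osserman holds, which finishes the equivalence.

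The step I expect to be the main obstacle is the comparison in the necessity part: the competitor $v_b$ equals $+\infty$ on $\partial B_r$ and so is not in $C(\overline{B_r})$, so Lemma \ref{comp} cannot be invoked on $B_r$ itself. The remedy is exactly the exhaustion by balls $B_{r'}$ with $r'\uparrow r$, on whose boundaries $v_b$ collapses to the finite constant $u_b(r')$, which can be arranged to dominate the bounded entire solution $U$. The remaining bookkeeping, namely $b(r)\to 0$, uses only the monotonicity from Lemma \ref{uab} and the finiteness and limiting behaviour of $R_\rho$ established in the proof preceding the blow-up theorem, both already at hand.
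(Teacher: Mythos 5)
Your proposal is correct and takes essentially the same route as the paper: sufficiency via $R_a=\infty$ from (\ref{ko}) together with the radial profile $x\mapsto u_a(|x|)$, and necessity by comparing the putative entire solution with blow-up solutions on slightly shrunken balls (so that Lemma \ref{comp} applies to finite boundary data) and driving the initial value $b$ to $0$. The only cosmetic difference is that the paper runs the comparison along a sequence $a_n\downarrow 0$ and uses $\inf_n u_{a_n}=0$ pointwise to get $v\equiv 0$, whereas you parametrize by the radius $r$ with $R_{b(r)}=r$ and track the value at the origin; your explicit handling of the $\int_0^t$ versus $\int_a^t$ discrepancy in (\ref{ko}) is a welcome extra precision.
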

\begin{proof}
  Assume that $\varphi$ satisfies the Keller-Osserman condition.  Then, by (\ref{ko}), $R_a=\infty$ and therefore the function $x\mapsto u _a(|x|)$ is an entire positive solution of (\ref{eeq}) as desired. Conversely, Assume that $\varphi$ does not satisfies the Keller-Osserman condition, i.e., for every $a>0$,
$$
\int_a^\infty\frac{dt}{\sqrt{\int_0^t\varphi(s)ds}}<\infty.
$$
As shown in (\ref{0infty}) , $\lim_{\rho\to 0}R_\rho=\infty$. Let $(a_n)_n$ be a strictly monotone
decreasing sequence with limit $0$ chosen so that $(R_{a_n})_n$ be a strictly monotone
increasing sequence. For every integer $n$, we denote $R_n:=R_{a_n}$ and $u_n:=u_{a_n}$.
Then it follows from Lemma \ref{uab} that
$$
u_{n+1}\leq u_n\;\textrm{ on }\; [0,R_n[ \;\textrm{ and } \; \inf_nu_n(r)=0\,\textrm{ for all }\,r\geq 0.
$$
 Now, assume that Eq. (\ref{eeq}) admits an entire nonegative solution $v$. Then a careful application of Lemma \ref{comp} shows that, for every $n$,
\begin{equation}\label{vunb}
v(x)\leq u_n(|x|)\;\textrm{ for }\; |x|<R_n.
\end{equation}
In fact,
 let $n$ be fixed and put $\theta_n:=\sup_{|x|\leq R_n}v(x)$. Since $\lim_{|x|\to R_n}u_n(|x|)=\infty$, there exists $\eta_0>0$ such that $u_n(|x|)\geq \theta_n$ for all $|x|\geq R_n-\eta_0$.
In particular, for every $0<\eta<\eta_0$, we have $u_n(R_n-\eta)\geq \theta_n$  and thereby
$$
u_n(|x|)\geq v(x)\;\textrm{ for }\; |x|=R_n-\eta.
$$
 Then applying Lemma \ref{comp} to $v$ and $u_n(|\cdot|)$ on $B(0, R_n-\eta)$, we deduce that $v(x)\leq u_n(|x|)$ for  $|x|< R_n-\eta$ which yields (\ref{vunb}) since $\eta$ is arbitrarily small. \\
 Let $x\in \R^d$ and let $n_0$ be the smallest  integer such that $|x|<R_{n_0}$. Obviously, $|x|<R_{n_0}<R_n$ for every $n> n_0$. Then it follows from (\ref{vunb}) that, for every $n\geq n_0$,
 $$
 v(x)\leq u_n(|x|).
 $$
 This yields, by letting $n$ tend to $\infty$,  that  $v(x)\leq \inf_nu_n(|x|)=0$. Therefore $v=0$ on $\R^d$ which complete the proof of the theorem.
\end{proof}

\end{document}